\newtheorem*{theorem*}{Theorem}
\newtheorem{theorem}{Theorem}[section]
\newtheorem{proposition}[theorem]{Proposition}
\newtheorem{lemma}[theorem]{Lemma}
\theoremstyle{definition}
\newtheorem{definition}[theorem]{Definition}
\newtheorem{example}[theorem]{Example}
\theoremstyle{remark}
\numberwithin{equation}{section}
\newcommand\mL{L\kern-0.08cm\char39}
\def\N{{\mathbb N}}
\def\Z{{\mathbb Z}}
\def\Q{{\mathbb Q}}
\def\R{{\mathbb R}}
\begin{document}

\begin{large}

\title[On Properties Expansive Group Actions]{On Properties Expansive Group Actions}

% Information for first author
\author[A. Barzanouni]{Ali Barzanouni}
  %  Address of record for the research reported here
\address{Department of Mathematics, School of Mathematical Sciences, Hakim Sabzevari University, Sabzevar, Iran}
\email{alibarzanouni@yahoo.com}
%    \thanks will become a 1st page footnote.
%\thanks{The first author was supported by   ... .}
\author[Mahin Sadat Divandar]{Mahin Sadat Divandar}%%%$^*$}
\address{Department of Mathematics, School of Mathematical Sciences, Hakim Sabzevari University, Sabzevar, Iran}
\email{md.divandar@gmail.com}

%    Information for second author
\author[E. Shah]{Ekta Shah}
%    Address of record for the research reported here
\address{Department of Mathematics, Faculty of Science, The Maharaja Sayajirao University of Baroda, Vadodara, India}
\email{ekta19001@gmail.com}
%    \thanks will become a 1st page footnote.
%\thanks{The second author was supported by ... .}

%    Information for third author

%\thanks{$^*$Corresponding author.}

%    General info
\subjclass[2010]{Primary 37C85, 37C50, 37C75; Secondary 54H20}

\keywords{Orbit expansive homeomorphism, Expansive group actions, Syndetic subgroups}

%\date{October 10th, 2012}

\begin{abstract}
We study several properties of expansive group actions on metric spaces and obtain relation between expansivity for subgroup and group actions. Through counter examples necessity of hypothesis are justified. We also study expansivity of covering maps. Further, we define orbit expansivity for group actions on topological spaces and use it to characterize expansive action.

%\keywords{First keyword \and Second keyword \and More}
% \PACS{PACS code1 \and PACS code2 \and more}
%\subclass{MSC code1 \and MSC code2 \and more}
\end{abstract}
\maketitle

\section{Introduction}\label{S:Intro}

%\bigskip
\noindent Let $(X,d)$ be a metric space and $h : X \longrightarrow X$ be a homeomorphism. Then there is a natural action of the additive group of integers, $(\Z, +)$, by this homeomorphism $h$ on  $X$ given by $(n,x)\rightsquigarrow h^n(x)$, $n \in \mathbb{Z}$ and $x \in X$. The classical theory of dynamical systems is more inclined to the study of this $\mathbb{Z}-$action, if the system is a discrete dynamical system. If the group $\mathbb{Z}$ is replaced by the additive group of real numbers, $(\mathbb{R}, +)$, then we study the dynamics of flows (i.e. continuous time evolution systems).  However, in last two decades there has been significant  development in the study of higher dimensional actions including actions of $\mathbb{Z}^d$ or $\mathbb{R}^d$ with $d > 1$. Some of the good recent references in this direction are \cite{mr,po,sd}.

\bigskip
\noindent  It is now more than a decade, wherein the study of topological aspect of the dynamics of actions of groups $\Z^d$ or $\R^d$, $d\geq1$, is replaced by arbitrary groups in general and finitely generated groups in particular. We wish to note here that the algebraic aspect of these actions is much older and is now a well established part of dynamical systems.

\bigskip
\noindent Recently, in 2014, Osipov and Tikhomirov \cite{os}, introduced the notion of shadowing property for finitely generated group actions. Barzanouni in \cite{ali} used the notion of shadowing property to study chain recurrent sets whereas Chung and Lee in \cite{Lee} showed that expansive action which has shadowing property are always topologically stable. Hurder in \cite{SH} discusses the dynamics of expansive actions on the unit circle $S^1$. In this paper, we introduce orbit expansive group actions and obtain a characterization of expansive group action  in terms of orbit expansive group actions.

\bigskip

\noindent  A homeomorphism  $h : X \longrightarrow X$ is said to be an \textit{expansive homeomorphism} provided there exists a real number $c>0$ such that whenever $x, y \in X$ with $x\not=y$ then there exists an integer $n$ (depending on $x, y$) satisfying  $d(h^n(x), h^n(y))>c$ \cite{AH}. Constant $c$ is called an \textit{expansive constant} for $h$. Since its inception expansiveness has been extensively studied in its own right and also its relation to other dynamical properties are studied. One of the important aspects of expansive dynamical system is the study of its various generalizations and variations in different setting. For instance, Kato defined notion of continuum--wise expansive homeomorphism \cite{HK} whereas measure expansivity was introduced and studied in \cite{pv}. Ruchi Das studied the notion of expansive homeomorphism on metric $G-$spaces. Tarun Das \emph{et al.} \cite{TKR} used the notion of expansive homeomorphism on topological space to prove the Spectral Decomposition Theorem on non--compact spaces. Achigar \emph{et al.} studied the notion of orbit expansivity on non--Hausdorff space \cite{art}. In this paper we study several properties expansive group actions from topological aspect.

\bigskip
\noindent The paper is organized in the following manner. In Section 2, we discuss preliminaries related to groups and group actions. Various properties of expansive group actions are studied in Section 3. Let $H$ be subgroup of a topological group $G$. Then $\varphi : G\times X \longrightarrow X$ is an expansive action of $G$ on $X$ if $\varphi_{|H} : H\times X \longrightarrow X$ is expansive. Through examples we show that converse is not true and also obtain condition under which converse is true. Further, in the same section we show that expansivity is preserved under the conjugacy if the space is compact.  We also study expansivity of covering map in Section 3. The notion for orbit expansivity for homeomorphisms was first defined in \cite{art}. In the last Section of paper we define concept of orbit expansive group actions and characterize expansive group actions using orbit expansive group actions. We also show that if there is an orbit expansive group action on a topological space then the space is always a $T_1-$space.

\section{Preliminaries}\label{S:preli}
\subsection{Group Theory}

\medskip
\noindent Let $G$ be a group and $H$ be subgroup of $G$. For $g \in G$, the subset $gH := \{gh : h\in H \}$ (respectively $Hg := \{hg : h\in H \}$) of $G$ is called \emph{left coset} (respectively \emph{right coset}) of $H$ in $G$. A subgroup $H$ is said to be \emph{normal subgroup} of $G$ if $gH = Hg$ for all $g \in G$. If $H$ is normal subgroup of $G$, then the set of all left cosets of $H$ in $G$ i.e. $\{gH : g \in G\}$ is a group and is known as the \emph{quotient group}. We denote it by $G/H$. The number of left cosets of $H$ in $G$ is known as the \emph{index of $H$ in $G$} and is denoted by $i_G(H)$. If $G$ is finite then it follows that $i_G(H) = o(G)/o(H)$. Here $o(G)$ denotes the order of a group $G$.

\medskip
\noindent A subset $S$ of $G$ is said to be a \emph{generating set} if for any $g \in G$, there exists a finite number of elements $s_1, s_2,\ldots, s_n \in S\cup S^{-1}$ such that $g=s_1s_2.....s_n$. Here the set $S^{-1} = \{s^{-1} : s \in S\}$. A group is said to be \emph{finitely generated} if its generating set is finite set. A generating set is known as a \emph{symmetric} if  $s^{-1} \in S$ for every $s \in S$. Observe that it is always convenient to describe group by a generating set.

\medskip
\noindent A non--empty subset $H$ of a topological group $G$ is said to be a \emph{syndetic set},  if there exists compact set $K$ such that $G=KH$. Note that this set $K$ need not be a subgroup of $G$. Further, without loss of generality we can assume that set $K$ is a symmetric set. For, if $K$ is not symmetric then we add all those $k^{-1}$ in $K$ so that it is a symmetric set. Observe that the new set is still a compact  set with $G=KH$, because  the map taking $g$ to its inverse, $g^{-1}$, is continuous map on $G$.   In the following result we obtain a necessary and sufficient condition for a subset to be a syndetic set.

\smallskip
\begin{proposition}\label{syndetic}
  Let $G$ be a topological group. Then a non--empty subset $H$ of $G$ is a syndetic set if and only if there exists a compact set $K$ such that for all $g\in G$, $Kg\cap H\neq \emptyset$.
\end{proposition}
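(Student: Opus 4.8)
The plan is to observe that the two conditions are really the same statement written in two ways, linked by the elementary identity that for $k,g\in G$ and $h\in H$ one has $g=kh$ if and only if $k^{-1}g=h$, together with the fact that inversion $g\mapsto g^{-1}$ is a self-homeomorphism of $G$, so that a set $K$ is compact precisely when $K^{-1}$ is compact.

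For the forward implication, suppose $H$ is syndetic, say $G=KH$ with $K$ compact; I claim the compact set $K^{-1}$ satisfies the second condition. Given $g\in G$, write $g=kh$ with $k\in K$ and $h\in H$; then $k^{-1}\in K^{-1}$ and $k^{-1}g=h\in H$, so $k^{-1}g\in K^{-1}g\cap H$, which gives $K^{-1}g\cap H\neq\emptyset$. Since $g$ was arbitrary, $K^{-1}$ is as required. Conversely, suppose there is a compact set $K$ with $Kg\cap H\neq\emptyset$ for every $g\in G$; I claim $K^{-1}$ witnesses syndeticity. Fix $g\in G$: by hypothesis there are $k\in K$ and $h\in H$ with $kg=h$, hence $g=k^{-1}h\in K^{-1}H$. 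As $g$ was arbitrary, $G=K^{-1}H$ with $K^{-1}$ compact, so $H$ is syndetic.

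There is essentially no obstacle in this argument; the only point requiring a little care is that $H$ is not assumed to be a subgroup, so in general $H^{-1}\neq H$ and one cannot simplify translates of $H$ — the argument must be routed through inverting the group elements and the compact set rather than $H$ itself. One may alternatively phrase the whole proof by invoking the remark preceding the statement, that the compact set witnessing syndeticity may be taken symmetric, in which case $K^{-1}=K$ and no relabeling is needed.
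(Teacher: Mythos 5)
Your proof is correct and follows essentially the same route as the paper: both hinge on the identity $g=kh \iff k^{-1}g=h$, with the only cosmetic difference that you pass to the compact set $K^{-1}$ while the paper replaces $K$ by a symmetric compact set (as your closing remark already notes, these amount to the same thing).
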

\begin{proof}
 Suppose $H$ is a syndetic set. Therefore there is a compact symmetric set $K$ such that $G=KH$. This implies each $t \in G$ can be written as $t=kh$, for some $k \in K$ and $h \in H$. Hence $k^{-1}t = h \in H$. Also, $k^{-1}t \in Kt$. Thus, $K\:t\cap H\neq \emptyset$. But $t \in G$ is arbitrary. Therefore for each $g \in G$, $Kg \cap H\neq \emptyset$.

\medskip
\noindent Conversely, suppose there is a compact set $K$ such that for every $g\in G$, $Kg\cap H\neq \emptyset$. Without loss of generality we can assume that $K$ is a symmetric set. Let $t \in G$. Then there are $k \in K$ and $h \in H$ such that $h = kt$. This implies there is  $h \in H$ and $k^{-1} \in K$ such that $t = k^{-1}h$. Hence $t \in KH$. But $t$ in $G$ is arbitrary. Therefore, $G=KH$. Thus there exists a finite set $K$ such that $G=KH$. Therefore, $H$ is a syndetic set.
\end{proof}

\smallskip
\subsection{Group Action}
\noindent  By a group here we mean a topological group though we do not specify topology on $G$ unless and otherwise required.

\smallskip
\begin{definition}
Let $X$ be a topological space and $G$ be a (topological) group. A continuous map $\varphi: G \times X \longrightarrow X$ is said to be a \emph{(left) action of a group} if the following are satisfied:
\begin{itemize}
\item[(1)]  $\varphi(g_{1}\:g_{2},\:x)= \varphi(g_{1},\varphi(g_{2},\:x))$, for all $ g_1,\:g_2 \in G, x \in X$.
\item[(2)]  $ \varphi(e,x)= x$, for all  $ x \in X$. Here $e$ is the identity element of $G$.
\end{itemize}
\end{definition}

\medskip
\noindent Note that for $g\in G$, $\varphi(g,\cdot) : X \longrightarrow X$ is a homeomorphism. We denote this $\varphi(g,\cdot)$ also by $\varphi_g$. All our actions are always continuous actions.

%%%%%%%%%%%%%%%%%%%%%%%%%%%%%%%%%%%%%%%%%%%%%%%%%%%%%%%%%%%%%%%%%%%
%%%%         Expansive group Actions
%%%%%%%%%%%%%%%%%%%%%%%%%%%%%%%%%%%%%%%%%%%%%%%%%%%%%%%%%%%%%%%%%%%

\section{Properties of Expansive Group Action}

\medskip
\begin{definition}
Let $X$ be a metric space with metric $d$ and $G$ be a topological group. An action $\varphi:G\times X\rightarrow X$ is said to be \emph{expansive} if there exists a constant $c>0$ such that for all $x, y \in X$ with $x\neq y$ there exists $g \in G$ satisfying $d(\varphi_g(x), \varphi_g(y))>c$. Constant $c$ is called \emph{an expansive constant for $\varphi$}. Equivalently, $\varphi$ is expansive if for each $g \in G$, $d(\varphi_g(x), \varphi_g(y))\leq c$, then $x=y$.
\end{definition}

\medskip
\noindent Let $H$ be a subgroup of group $G$ and $\varphi:G\times X\longrightarrow X$ be an action of $G$ on $X$. Suppose $\varphi_{|H} :H\times X\longrightarrow X$ is expansive, then obviously $\varphi$ is expansive. In the following example we observe that in general converse is not true.

\medskip
\begin{example}\label{expansive}
Let $\mathbb{T}^{2}=\mathbb{R}^{2}/\mathbb{Z}^{2}$ denote the additive $2-$dimensional torus. Note that any continuous surjective endomorphism $f:\mathbb{T}^{2} \longrightarrow \mathbb{T}^{2}$  is of the form $f(x)= f_A(x)$ where $A$ is a non-singular $2\times 2$ integer matrix and $f_A$ is defined by
$$ f_A(x)= Ax \ (mod \: 1)$$
The invertible transformation $f_A$ is expansive if and only if $A$ has no eigenvalue of modulus one \cite{AH}. Take
\begin{equation}
B=\left(%
\begin{array}{cc}
  -1 & 1 \\
  0 & 1 \\
\end{array}%
\right)  \;\& \;\;
C=\left(%
\begin{array}{cc}
  -1 & 0 \\
  1 & 1 \\
\end{array}%
\right) \ and \ G=<C, B>
\end{equation}
 Let the action of $G$, $\varphi:G\times \mathbb{T}^{2} \longrightarrow \mathbb{T}^{2}$ be given by $\varphi(A, x)= Ax \ (mod \:1)$. Note that eigenvalues of the matrix $BC$ are not of unit modulus and therefore if $H$ is subgroup  generated by $BC$ (i.e. $H=<BC>$), then it follows that $\varphi|_H$ is expansive. Hence $\varphi:G\times \mathbb{T}^{2}\longrightarrow \mathbb{T}^{2}$ is expansive. But neither $\varphi|_{<B>}$ or $\varphi|_{<C>}$ is expansive.
\end{example}

\medskip
\noindent If $H$ is subgroup of finite index of a group $G$, then by Proposition \ref{syndetic} obviously $H$ is a syndetic set. Note that in Example \ref{expansive}, neither of the subgroups $<B>$ or $<C>$ are syndetic sets.  In the following result we obtain conditions under which expansivity of a group implies expansivity of subgroup.

\medskip
\begin{proposition}
  Let $X$ be a compact metric space and let $H$ be a syndetic subgroup of  $G$. Suppose $\varphi:G\times X\longrightarrow X$ is an action of $G$ on $X$. Then $\varphi$ is expansive if and only if $\varphi_{| H}$ is expansive.
\end{proposition}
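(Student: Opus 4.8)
The forward implication, that expansivity of $\varphi_{|H}$ forces expansivity of $\varphi$, is the one already pointed out in the text and requires no argument: any $h\in H$ that separates a pair of points under $\varphi_{|H}$ lies in $G$. So the substance of the proposition is the converse, and the plan is the following. Fix an expansive constant $c>0$ for $\varphi$ and, since $H$ is syndetic, a compact set $K\subseteq G$ with $G=KH$ (we will not even need $K$ to be symmetric here). The goal is to produce a $\delta>0$ that serves as an expansive constant for $\varphi_{|H}$; this $\delta$ will come from a uniform modulus-of-continuity estimate for the family $\{\varphi_k\}_{k\in K}$, and this is the step where compactness of $X$, together with compactness of $K$, is essential.

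\emph{The estimate.} I would first establish: there is $\delta>0$ such that $d(\varphi_k(z),\varphi_k(w))\le c$ whenever $k\in K$ and $d(z,w)\le\delta$. This is a routine compactness argument. If it failed, then for every $n$ one could pick $k_n\in K$ and $z_n,w_n\in X$ with $d(z_n,w_n)\le 1/n$ but $d(\varphi_{k_n}(z_n),\varphi_{k_n}(w_n))>c$; since $K\times X\times X$ is compact, pass to a convergent subnet $(k_\alpha,z_\alpha,w_\alpha)\to(k,z,w)$, observe $z=w$ because $d(z_\alpha,w_\alpha)\to 0$, and use continuity of $\varphi$ to get $\varphi(k_\alpha,z_\alpha)\to\varphi(k,z)$ and $\varphi(k_\alpha,w_\alpha)\to\varphi(k,z)$, so $d(\varphi_{k_\alpha}(z_\alpha),\varphi_{k_\alpha}(w_\alpha))\to 0$, contradicting that each of these distances exceeds $c$. (If $G$ happens to be metrizable, subsequences work in place of subnets.)

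\emph{Conclusion.} Now take $x\ne y$ in $X$. Expansivity of $\varphi$ gives $g\in G$ with $d(\varphi_g(x),\varphi_g(y))>c$. Write $g=kh$ with $k\in K$ and $h\in H$; the action axiom yields $\varphi_g=\varphi_k\circ\varphi_h$, hence $d(\varphi_k(\varphi_h(x)),\varphi_k(\varphi_h(y)))>c$. If we had $d(\varphi_h(x),\varphi_h(y))\le\delta$, then the estimate above, applied to the points $\varphi_h(x),\varphi_h(y)$ and to $k\in K$, would force $d(\varphi_k(\varphi_h(x)),\varphi_k(\varphi_h(y)))\le c$, a contradiction. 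Therefore $d(\varphi_h(x),\varphi_h(y))>\delta$ with $h\in H$, which is exactly the statement that $\delta$ is an expansive constant for $\varphi_{|H}$.

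The only genuinely delicate point is that the estimate in the middle step must be uniform over all $k\in K$ (pointwise continuity of each individual $\varphi_k$ would not suffice); it is precisely for this uniformity that compactness of both $K$ and $X$ is used. Once that estimate is in hand, the remainder is just the action identity combined with the decomposition $G=KH$ coming from syndeticity.
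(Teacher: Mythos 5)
Your proof is correct and follows essentially the same route as the paper: decompose $g=kh$ via the syndetic relation $G=KH$, and transfer the separation from $g$ to $h$ through a uniform continuity estimate for $\{\varphi_k\}_{k\in K}$. The only difference is that you actually prove that equicontinuity estimate (via compactness of $K\times X\times X$ and a subnet argument), whereas the paper simply asserts the corresponding implication, so your write-up is, if anything, more complete.
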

\begin{proof} It is sufficient to prove that if $\varphi$ is an expansive action of $G$ on $X$, then $\varphi_{|H}$ is expansive. Since $H$ is syndetic for  $G$, there is  compact set $K\subseteq G$ such that $G=KH$. For $c>0$, there is $\delta>0$ such that
\begin{equation}\label{index}
 d(x, y)<\delta \Longrightarrow d(\varphi_{g}(x), \varphi_{g}(y))<c , \ \mbox{for } g\in K
 \end{equation}
Let $x,y \in X$ with $x\neq y$. Then using expansivity of  $\varphi$, it follows that there is $g\in G$ such that $d(\varphi_g(x), \varphi_g(y))>c$. But $g= g_ih$, for some $g_i\in K$ and some $h \in H$. Therefore using  inequality \ref{index} we obtain that $d(\varphi_h(x), \varphi_h(y))>\delta$. Hence $\varphi_{|H}$ is expansive with expansive constant $\delta$.
\end{proof}

\medskip
\noindent Consider $\R$ with usual topology. Then the additive group of real numbers is a topological group. Also, the set of rational numbers is a syndetic subgroup of $\R$ as there exists a compact subset $[0, 1]$ of $\R$ such that $\R=[0,1] \Q$. Note that $\Q$ is not a subgroup of finite index in $\R$. We now recall the definition of conjugate actions.
\begin{definition}
  Let  $\varphi : G\times X\longrightarrow X$ and $\psi : G\times Y\longrightarrow Y$ be two actions. Then $\varphi$ and $\psi$ are said to be \emph{topologically conjugate} if there exists a homeomorphism $h : X\longrightarrow Y$ such that  $h\circ \varphi_g=\psi_g\circ h$, for all $g\in G$. Homeomorphism $h$ is called as\emph{ conjugacy} between $\varphi$ and $\psi$.
\end{definition}
\medskip
\noindent Note that the condition $h\circ \varphi_g=\psi_g\circ h$ can also be written as $h(\varphi(g,x))= \psi(g,h(x))$, for all $g\in G$ and $x\in X$.
\noindent In the following Theorem we show that expansivity is preserved under conjugacy if the space is compact.

\medskip
\begin{theorem}\label{exp}
Let $(X,d)$ and $(Y, \rho)$ be two compact metric spaces and let $G$ be a group. Suppose $\varphi : G\times X\longrightarrow X$ and $\psi : G\times Y\longrightarrow Y$ are conjugate actions with conjugacy $h : X \longrightarrow Y$. If $\varphi$ is expansive then so is $\psi$.
\end{theorem}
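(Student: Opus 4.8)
The plan is to transfer an expansive constant for $\varphi$ on $X$ across the conjugacy $h$ to produce an expansive constant for $\psi$ on $Y$. Let $c>0$ be an expansive constant for $\varphi$. Since $h:X\longrightarrow Y$ is a homeomorphism between compact metric spaces, $h^{-1}:Y\longrightarrow X$ is uniformly continuous; hence there exists $\eta>0$ such that $\rho(y_1,y_2)<\eta$ implies $d(h^{-1}(y_1),h^{-1}(y_2))<c$ for all $y_1,y_2\in Y$. I claim $\eta$ is an expansive constant for $\psi$.

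First I would take $y_1,y_2\in Y$ with $y_1\neq y_2$ and set $x_i=h^{-1}(y_i)$, so $x_1\neq x_2$ since $h$ is a bijection. By expansivity of $\varphi$ there is $g\in G$ with $d(\varphi_g(x_1),\varphi_g(x_2))>c$. Next I would use the conjugacy relation in the form $h(\varphi(g,x))=\psi(g,h(x))$, i.e. $\psi_g(h(x_i))=h(\varphi_g(x_i))$, so that $h^{-1}(\psi_g(y_i))=\varphi_g(x_i)$. Then the inequality $d(h^{-1}(\psi_g(y_1)),h^{-1}(\psi_g(y_2)))>c$ holds, and by the contrapositive of the uniform-continuity choice of $\eta$ this forces $\rho(\psi_g(y_1),\psi_g(y_2))\geq\eta$ (strictly, one arranges the inequality to be strict by choosing $\eta$ for $c/2$ or by noting $d(\cdot,\cdot)>c$ already excludes $\rho<\eta$). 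Hence there is $g\in G$ with $\rho(\psi_g(y_1),\psi_g(y_2))>\eta'$ for a suitable $\eta'<\eta$, which shows $\psi$ is expansive.

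The only real point requiring care is that compactness is genuinely used: it guarantees that $h^{-1}$ is uniformly continuous, which is what lets a single $\eta$ work simultaneously for all pairs $y_1,y_2$. On a non-compact space $h^{-1}$ need only be continuous, and no uniform $\eta$ need exist, so the argument would break down — this is consistent with the hypothesis of the theorem and with the remarks in the paper about compactness being essential. A minor bookkeeping nuisance is keeping the inequalities strict; this is handled by applying uniform continuity to the constant $c/2$ rather than $c$, so that $d(\varphi_g(x_1),\varphi_g(x_2))>c$ together with $\rho(\psi_g(y_1),\psi_g(y_2))<\eta$ would yield the contradiction $c<d(\cdot,\cdot)\le c/2$ is avoided; concretely, one shows $\rho(\psi_g(y_1),\psi_g(y_2))\ge\eta$ and then any $\eta'\in(0,\eta)$ serves as an expansive constant for $\psi$.
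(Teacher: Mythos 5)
Your proof is correct and takes essentially the same route as the paper: compactness yields uniform continuity across the conjugacy, which transfers the expansive constant $c$ for $\varphi$ into an expansive constant for $\psi$. You in fact pin down the needed ingredient more precisely than the paper's terse argument --- it is uniform continuity of $h^{-1}$ (the paper's displayed implication is the uniform continuity of $h$, with the verification left as ``easy to see'') --- and your fix for strictness, passing to any $\eta'\in(0,\eta)$, is fine.
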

\begin{proof}
Let $\varphi$  be an expansive action with expansive constant $c$. For $c>0$ there is $\delta>0$ such that
\begin{equation}
d(x, y)<\delta \Longrightarrow \rho(h(x), h(y))<c
\end{equation}
It is easy to see that $\psi$ is expansive on $Y$ with expansive constant $\delta$.
\end{proof}

\medskip
\noindent A metric space $X$ with a metric $d$ is said to satisfy  \emph{Property }$\textrm{P}$ if  for every $\epsilon>0$ there exists a compact subset $C$ of $X$ such that $d^{-1}([0, \epsilon))\cup C\times C= X\times X$. Here
\begin{equation*}
d^{-1}([0, \epsilon))= \{(x, y)\in X\times X: d(x, y)<\epsilon\}
\end{equation*}

\noindent Obviously every compact metric space satisfies  Property $\textrm{P}$, but converse need not be true in general. For  instance, consider $X=(0, 1)$ with usual metric of $\R$, the set of real numbers. Then $X$ is not compact. But for every $\epsilon>0$ it follows that
$$d^{-1}([0, \epsilon))\cup [\epsilon, 1-\epsilon]\times [\epsilon, 1-\epsilon]= (0, 1)\times (0, 1).$$
Hence $X$ satisfies Property $\textrm{P}$. In order to prove next theorem we need the following Lemma:
\medskip
\begin{lemma}\label{expc}
Let $G$ be a finitely generated group with generating set $S$ and $X$ be a  metric space satisfying Property $\textrm{P}$. Suppose the action $\varphi: G \times X \to X$ is an expansive action with expansive constant $c$. Then for any $\epsilon >0$, there exists a non--empty finite subset $F$ of $G$ such that whenever $d(\varphi_g(x), \varphi_g(y))<c$, for all  $g\in F$, then $d(x,y)<\epsilon$
\end{lemma}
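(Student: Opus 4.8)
The plan is to argue by contradiction, replacing the compactness of $X$ used in the classical version of this fact by Property $\textrm{P}$. Suppose the conclusion fails for some $\epsilon>0$. Since $G$ is generated by the finite set $S$, for each $n\in\N$ put $B_n=\{s_1s_2\cdots s_k : k\leq n,\ s_i\in S\cup S^{-1}\}\cup\{e\}$, the (finite) ball of radius $n$ in the word metric; then $B_1\subseteq B_2\subseteq\cdots$ and $\bigcup_n B_n=G$. Applying the negation of the statement to each finite set $F=B_n$ produces points $x_n,y_n\in X$ with $d(x_n,y_n)\geq\epsilon$ and $d(\varphi_g(x_n),\varphi_g(y_n))<c$ for every $g\in B_n$.

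Next I would invoke Property $\textrm{P}$ for this $\epsilon$: there is a compact set $C\subseteq X$ with $d^{-1}([0,\epsilon))\cup(C\times C)=X\times X$. Because $d(x_n,y_n)\geq\epsilon$, the pair $(x_n,y_n)$ cannot lie in $d^{-1}([0,\epsilon))$, hence $(x_n,y_n)\in C\times C$; so both sequences lie in the compact set $C$. Passing to a subsequence, we may assume $x_n\to x$ and $y_n\to y$ with $x,y\in C$, and continuity of $d$ gives $d(x,y)\geq\epsilon$, so in particular $x\neq y$.

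Finally I fix an arbitrary $g\in G$. For all sufficiently large $n$ we have $g\in B_n$, hence $d(\varphi_g(x_n),\varphi_g(y_n))<c$ for all such $n$; since $\varphi_g$ and $d$ are continuous, letting $n\to\infty$ yields $d(\varphi_g(x),\varphi_g(y))\leq c$. As $g\in G$ was arbitrary, the equivalent formulation of expansivity recalled after the definition forces $x=y$, contradicting $x\neq y$. This contradiction establishes the lemma, with $F$ taken to be $B_N$ for a suitable $N$.

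The argument is essentially routine once it is set up; the place where the hypotheses are used in an essential way is the extraction of the convergent subsequences, where Property $\textrm{P}$ is exactly what lets us confine the pairs $(x_n,y_n)$ to a fixed compact set even though $X$ may be non-compact, while finite generation of $G$ only serves to produce an increasing family of finite subsets exhausting $G$ (equivalently, to note that $G$ is countable). I expect the only subtlety worth double-checking to be that the strict inequality $<c$ survives the limit as the non-strict $\leq c$, and that this is precisely the hypothesis appearing in the ``equivalently'' version of expansiveness, so that no sharpening of the inequality is needed.
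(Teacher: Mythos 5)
Your proof is correct and follows essentially the same route as the paper's: argue by contradiction with an exhausting increasing sequence of finite subsets of $G$, use Property $\textrm{P}$ to confine the bad pairs $(x_n,y_n)$ to a compact set, extract convergent subsequences, and pass to the limit to contradict expansivity. Your handling of the limit (obtaining only $d(\varphi_g(x),\varphi_g(y))\leq c$ and invoking the equivalent $\leq c$ formulation of expansivity) is in fact slightly more careful than the paper's, which asserts the strict inequality $<c$ after taking limits.
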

\begin{proof}
If possible there is $\epsilon>0$ such that for any finite set $F$ of $G$, there exist $x_F, y_F\in X$ such that
\begin{equation*}
  \sup_{g\in F}d((\varphi_g(x_F), \varphi_g(y_F))< c \ \text{and} \ \  d(x_F, y_F)\geq\epsilon.
\end{equation*}
Choose a sequence of finite subsets $F_n$ of $G$ satisfying
\begin{equation*}
  F_1\subseteq F_2\subseteq \ldots, \text{ and } G=\bigcup_{n\in\mathbb{N}} F_n.
\end{equation*}
This implies for every $n\in\N$, there are $x_n, y_n$ in $X$ such that
\begin{equation}\label{5}
d(x_n, y_n)\geq\epsilon \text{ and}  \sup_{g\in F_n}d(gx_n, gy_n)<c
\end{equation}
But $X$ satisfy Property $\textrm{P}$. Therefore for $\epsilon>0$ there is  compact set
$C\subseteq X$ such that $$d^{-1}([0, \epsilon))\cup C\times C= X\times X.$$
Inequality \ref{5} implies that sequences $\{x_n\}, \{y_n\}$ must be in  $C$. Compactness of $C$ implies that there are subsequences of $\{x_n\}$ and $\{y_n\}$ which are convergent. If $x_{n_k}\to x$ and $y_{n_m}\to y$, then we obtain that for each $g\in G$
$$d(\varphi_g(x), \varphi_g(y))<c.$$
Using expansivity of $\varphi$ it follows that $x=y$, But this is a contradiction to $d(x, y)\geq \epsilon$.
\end{proof}

\bigskip
\noindent Note that Lemma 2.10 in \cite{Lee} is a special case of above Lemma \ref{expc} as the latter is true even for non--compact space. Let $A$ be a subset of $X$. We say that an action $\varphi:G\times X \longrightarrow X$ is expansive on a subset $A$, if there is $c>0$ such that for every $x,y \in A$ with $x\neq y$ there is $g \in G$ satisfying $d(\varphi_g(x), \varphi_g(y))>c$. Also we say that non-empty subset $A\subseteq X$ satisfy Property \textrm{P},  if  for every $\epsilon>0$ there exists a compact subset $C\subseteq A$ of $X$ such that $\left(A\times A \cap d^{-1}([0, \epsilon))\right)\cup C\times C= A\times A$.
\begin{theorem}
  Suppose $X$ is a compact metric space and let $A$ be a dense subset of $X$ with Property \textrm{P} and let $\varphi:G\times X \longrightarrow X$ be an action. Then $\varphi$ is expansive on $A$ if and only  if
 $\varphi$ is expansive on $X$.
 \end{theorem}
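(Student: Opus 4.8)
The plan is to prove the nontrivial direction: assuming $\varphi$ is expansive on the dense subset $A$, show it is expansive on all of $X$. (The reverse direction is immediate, since expansivity on $X$ restricts to expansivity on any subset with the same constant.) Let $c>0$ be an expansive constant for $\varphi$ on $A$. The strategy is to produce a uniform expansive constant valid for \emph{all} pairs in $X$ by approximating arbitrary points of $X$ by points of $A$ and controlling how trajectories move under small perturbations; this is where compactness of $X$ and Property $\textrm{P}$ of $A$ enter.

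First I would establish the analogue of Lemma \ref{expc} for the restricted action on $A$: since $A$ satisfies Property $\textrm{P}$ and $\varphi$ is expansive on $A$ with constant $c$, for any $\epsilon>0$ there is a finite set $F\subseteq G$ such that for $x,y\in A$, if $d(\varphi_g(x),\varphi_g(y))<c$ for all $g\in F$, then $d(x,y)<\epsilon$. The proof is a verbatim repetition of the argument for Lemma \ref{expc}, with the role of the ambient space played by $A$: a putative counterexample gives sequences $x_n,y_n\in A$ with $d(x_n,y_n)\ge\epsilon$ but $\sup_{g\in F_n}d(\varphi_g(x_n),\varphi_g(y_n))<c$ along an exhausting chain $F_1\subseteq F_2\subseteq\cdots$ with $\bigcup F_n=G$; Property $\textrm{P}$ of $A$ forces the sequences into a compact $C\subseteq A$, and passing to convergent subsequences with limits $x,y\in C\subseteq A$ yields $d(\varphi_g(x),\varphi_g(y))\le c$ for every $g\in G$, hence $x=y$ by expansivity on $A$, contradicting $d(x,y)\ge\epsilon$. (Here I would implicitly take $G$ to be countable, e.g. finitely generated, as in Lemma \ref{expc}, or otherwise note that the exhausting-chain argument still goes through with nets; I expect the paper intends the finitely generated setting.)

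Next, fix $\epsilon>0$; by the finite set $F$ just produced, each $\varphi_g$ with $g\in F$ is uniformly continuous on the compact space $X$, so there is $\delta\in(0,\epsilon)$ with $d(u,v)<\delta \Rightarrow d(\varphi_g(u),\varphi_g(v))<(c - c')/3$ say, where it is cleaner to work with a strictly smaller target constant $c' < c$. The claim is then that $c''$, a suitably chosen constant strictly less than $c$ (say $c'' = c'/2$), is an expansive constant for $\varphi$ on $X$. Take $x,y\in X$ with $x\ne y$; by denseness of $A$ pick $a,b\in A$ with $d(x,a)<\delta$ and $d(y,b)<\delta$, choosing $\delta$ also small enough (relative to a lower bound we are free to impose since we may assume $d(x,y)$ is as small as we like — points far apart are handled directly by a fixed constant $\le \diam(X)$, actually by $c''$ itself once we argue carefully) that $a\ne b$. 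Suppose for contradiction that $d(\varphi_g(x),\varphi_g(y))\le c''$ for all $g\in G$. Then for $g\in F$, the triangle inequality together with the uniform continuity estimate gives $d(\varphi_g(a),\varphi_g(b)) \le d(\varphi_g(a),\varphi_g(x)) + d(\varphi_g(x),\varphi_g(y)) + d(\varphi_g(y),\varphi_g(b)) < c$. By the restricted Lemma, $d(a,b)<\epsilon$, hence $d(x,y) < \epsilon + 2\delta < 3\epsilon$. Since $\epsilon$ was arbitrary, $d(x,y)=0$, i.e. $x=y$, a contradiction. Therefore $c''$ is an expansive constant for $\varphi$ on $X$.

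The main obstacle — and the point to be careful about — is the order of quantifiers: I cannot first fix an expansive constant for $X$ and then choose $\epsilon$, because the finite set $F$ and hence $\delta$ depend on $\epsilon$. The clean way around this is the ``$\epsilon$ arbitrary'' contradiction just sketched: assume no positive constant works, i.e. for the specific candidate $c''$ there exist $x\ne y$ with orbits staying within $c''$; then run the above for every $\epsilon>0$ to force $d(x,y)<3\epsilon$ for all $\epsilon$, hence $x=y$. One must also verify that $a\ne b$ can be arranged — this uses $x\ne y$ and choosing $\delta < d(x,y)/2$, which is legitimate because $\delta$ is allowed to depend on the pair in this contradiction framework — and that the uniform continuity constant is chosen \emph{after} $F$ (hence after $\epsilon$) but \emph{uniformly over the finitely many} $g\in F$, which is exactly why finiteness of $F$, supplied by the Property $\textrm{P}$ lemma, is essential. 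Everything else is routine triangle-inequality bookkeeping.
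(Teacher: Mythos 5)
Your proof is correct and follows essentially the same route as the paper's: both rest on the Property P analogue of Lemma~\ref{expc} applied to $A$ (producing the finite set $F$), uniform continuity of the finitely many $\varphi_g$, $g\in F$, on the compact space $X$, density of $A$, and a triangle-inequality estimate; the paper merely packages this as an explicit expansive constant (any $\delta<c/3$, handling pairs with $d(x,y)\geq c/3$ via $g_0=e$) whereas you run an $\epsilon\to 0$ contradiction. The countability/finitely-generated caveat you flag for the restricted lemma is equally present, though left implicit, in the paper's own proof, and your worry about arranging $a\neq b$ is actually unnecessary since the lemma needs no such hypothesis.
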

 \begin{proof}
 It is sufficient to show that if $\varphi $ is expansive on $A$, then it is expansive on $X$. Let $c>0$ be expansive constant for $\varphi$ on $A$. Let $x, y\in X$ with $x \neq y$ and $\delta$ be such that $0 < \delta < \frac{c}{3}$. Then, we show that there is $g_0\in G$ such that $d(\varphi_{g_0}(x), \varphi_{g_0}(y)) > \delta$.   If $d(x, y)\geq \frac{c}{3}$, then we are through as in this case we take $g_0=e$.

  \medskip
  \noindent Let $x, y \in X$ be such that $d(x, y)< \epsilon$ where $0<\epsilon<\frac{c}{3}$.  Since $\overline{A}=X$, there is $\beta>0$ such that if $a\in B(x, \beta)\cap A$ and $b\in B(y, \beta)\cap A$ then $d(a, b)>\epsilon$. Since $A$ has  Property \textrm{P}, there is a finite set $F\subseteq G$ such that for  $a,b\in A$
 \begin{align*}\label{finite}
 d(a, b)>\epsilon \Longrightarrow d(\varphi_{g_0}(a), \varphi_{g_0}(b))>c
  \end{align*}
for  some  $g_0\in F$.   Further, $F$ is finite set and $\varphi$ is a continuous action, therefore for $\epsilon>0$ there is a  $\lambda >0$  such that for all $g\in F$
 \begin{align*}
 d(z, w)< \lambda \Longrightarrow d(\varphi_g(z), \varphi_g(w))<\epsilon
 \end{align*}
 Take $\mu= \min \{\beta, \lambda\}$ and choose $a\in B(x, \mu)\cap  A$ and $b\in B(y, \mu)\cap A$. Then  for all $g\in F$, $d(a, b)>\epsilon$,
 $$d(\varphi_g(x), \varphi_g(a))<\epsilon$$
 and
 $$d(\varphi_g(y),  \varphi_g(b))<\epsilon.$$
Moreover  there is $g_0\in F$ such that
 $$d(\varphi_{g_0}(a), \varphi_{g_0}(b))> c.$$
 Note that for this $g_0 \in G$
 \begin{align*}
 d(\varphi_{g_0}(x), \varphi_{g_0}(y)) \geq d(\varphi_{g_0}(a),  \varphi_{g_0}(b))- d(\varphi_{g_0}(a), \varphi_{g_0}(x)) -  d(\varphi_{g_0}(b), \varphi_{g_0}(y))  >c- 2\epsilon >\frac{c}{3}.
 \end{align*}
 Hence $\delta$ is an expansive constant for  $\varphi$ on $X$.
 \end{proof}
%\medskip
%\begin{theorem}
%Let $\varphi:G\times X \longrightarrow X$ is an action of $G$ on $X$.  Suppose $A$ be a $G-$invariant subset of compact metric space $X$ and suppose $X\setminus A$ is a union of finitely many distinct $G-$orbits. If $\varphi$ is expansive on $A$, then  $\varphi$ is expansive on $X$.
% \end{theorem}
%
% \begin{proof}
% Let $X\setminus A = \cup_{i=1}^n G(x_i)$, where $G(x_i)\neq G(x_j)$, for $i\neq j$. It is sufficient to show that $\varphi $ is expansive on $A \cup G(x_1)$ as the latter will follow by induction. Let $c>0$ be expansive constant for $\varphi$ on $A$. Suppose $a_1$ and $a_2$ are two distinct elements in $A$ such that for every $g$ in $G$
%  $$d(\varphi_g(a_1), \varphi_g(x_1))\leq \frac{c}{2}$$
%  and
%  $$d(\varphi_g(a_2), \varphi_g(x_1))\leq \frac{c}{2}.$$
%  This implies that for each $g \in G$
%  $$d(\varphi_g(a_2), \varphi_g(a_1))\leq c.$$
%  But $\varphi$ is expansive on $A$ with expansive constant $c$. Therefore $a_1=a_2$. Hence there is at most one $a \in A$ such that
%
%  \begin{equation}\label{21}
%  d(\varphi_g(a), \varphi_g(x_1))\leq \frac{c}{2},
%  \end{equation}
%  for each $g \in G$. If there is no element of $A$ satisfying  inequality (\ref{21}), then we take expansive constant $\delta$ for $\varphi$ on $X$ as $\delta=\frac{c}{2}$ else take $\delta$ such that
%  $$0 < \delta < \inf \left\{d(ga, hx_1) : g, h \in G\right\}.$$
%  Note that for each $g \in G$, $ga \in A$ as $A$ is a $G-$invariant set. Therefore $\inf \left\{d(ga, hx_1) : g, h \in G\right\}$ is positive.
%  \end{proof}

\medskip
\noindent If $c$ is an expansive constant for an action $\varphi$ of $G$ on a compact metric space $X$, then any $\delta$, $0< \delta < c$, is also an expansive constant for $\varphi$. If $\mathcal{F}$ is the set of all expansive constants for $\varphi$, then $\mathcal{F}$ is a bounded set of positive real numbers and hence least upper bound of $\mathcal{F}$ exists. In the following Theorem we show that the least upper bound of $\mathcal{F}$ is not an expansive constant.

\medskip
\begin{theorem}\label{const}
Let $X$ be a compact metric space and let $G$ be a finitely generated group with generating set $S$. Suppose  $\varphi:G\times X \longrightarrow X$ is expansive action. If $\beta$ is the least upper bound of the set $\mathcal{F}$ of all expansive constants for $\varphi$ then $\beta$ is not an expansive constant for $\varphi$.
\end{theorem}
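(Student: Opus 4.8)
The plan is to argue by contradiction: suppose $\beta$ is itself an expansive constant for $\varphi$, and produce a strictly larger expansive constant, contradicting that $\beta = \sup \mathcal{F}$. The engine driving this is Lemma~\ref{expc}, which converts the infinite quantifier "$d(\varphi_g(x),\varphi_g(y)) \le c$ for all $g \in G$" into a finite one over a finite subset $F \subseteq G$, at the cost of only concluding $d(x,y) < \epsilon$ rather than $x = y$. Since $X$ is compact it certainly satisfies Property~$\mathrm{P}$, so the Lemma applies to $\varphi$ with expansive constant $\beta$ (note $\beta \in \mathcal{F}$ under our assumption, and in any case every $c < \beta$ lies in $\mathcal{F}$, which is all the Lemma needs).

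First I would fix a small $\epsilon > 0$ and invoke Lemma~\ref{expc}: there is a nonempty finite set $F \subseteq G$ such that if $d(\varphi_g(x),\varphi_g(y)) < \beta$ for every $g \in F$, then $d(x,y) < \epsilon$. Next, consider the compact set
\begin{equation*}
\Delta_\epsilon = \{(x,y) \in X \times X : d(x,y) \ge \epsilon\},
\end{equation*}
and on it define the continuous function $\Phi(x,y) = \max_{g \in F} d(\varphi_g(x),\varphi_g(y))$, which is well-defined and continuous because $F$ is finite and each $\varphi_g$ is continuous. By the Lemma, $\Phi(x,y) \ge \beta$ for every $(x,y) \in \Delta_\epsilon$ — indeed if $\Phi(x,y) < \beta$ then $d(x,y) < \epsilon$, contradicting $(x,y) \in \Delta_\epsilon$. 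Since $\Delta_\epsilon$ is compact and nonempty (provided $\epsilon$ is chosen below $\operatorname{diam}(X)$, which is possible as $X$ is not a single point — if it were, expansivity would be vacuous and there would be no $\beta$), $\Phi$ attains a minimum $m$ on $\Delta_\epsilon$, and $m \ge \beta$.

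Now I claim any $c'$ with $\beta < c' < m$... but here one must be slightly careful, since $m = \beta$ is a priori possible. The point is to choose $\epsilon$ cleverly: if $\beta$ were an expansive constant, then the same argument with "$x = y$" in place of "$d(x,y) < \epsilon$" shows $\Phi(x,y) > \beta$ strictly for every $(x,y) \in \Delta_\epsilon$ — because distinct points $x \ne y$ force some $g \in G$ with $d(\varphi_g(x),\varphi_g(y)) > \beta$, and one shows via Lemma~\ref{expc} that such a $g$ can be taken in $F$. Hence $m > \beta$ strictly, and then any $c'$ with $\beta < c' < m$ is an expansive constant: if $d(\varphi_g(x),\varphi_g(y)) \le c'$ for all $g \in G$ then in particular $\Phi(x,y) \le c' < m$, so $(x,y) \notin \Delta_\epsilon$, i.e. $d(x,y) < \epsilon$; letting $\epsilon \to 0$ (or rather, running this for a sequence $\epsilon_n \to 0$ with corresponding finite sets $F_n$ and minima $m_n$) forces $x = y$. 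This exhibits $c' > \beta$ in $\mathcal{F}$, contradicting $\beta = \sup \mathcal{F}$.

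The main obstacle is the subtle interplay between "Property $\mathrm{P}$ / Lemma~\ref{expc} only gives $d(x,y) < \epsilon$" and the need for a \emph{strict} inequality $m > \beta$ to extract a genuinely larger constant. The cleanest route is probably to run Lemma~\ref{expc} simultaneously for a sequence $\epsilon_n \downarrow 0$ with nested finite sets $F_n$, let $m_n = \min_{\Delta_{\epsilon_n}} \Phi_n \ge \beta$, and show that if all $m_n = \beta$ one can, by a compactness/diagonal argument across $n$, produce a pair of distinct points $x \ne y$ with $d(\varphi_g(x),\varphi_g(y)) \le \beta$ for every $g \in \bigcup_n F_n = G$ — contradicting that $\beta$ is expansive. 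Thus some $m_n > \beta$, and $(\beta + m_n)/2 \in \mathcal{F}$ does the job. I expect the bookkeeping of which finite set and which $\epsilon$ to use, and making the diagonal limit argument rigorous, to be the part requiring the most care; the rest is routine.
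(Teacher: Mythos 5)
There is a genuine gap at the exact point you flagged: establishing the strict inequality $m>\beta$. Lemma~\ref{expc} (in contrapositive) only gives, for a pair with $d(x,y)\geq\epsilon$, some $g\in F$ with $d(\varphi_g(x),\varphi_g(y))\geq\beta$, never a strict inequality, and nothing in the Lemma lets you relocate the witness of expansiveness (the $g\in G$ with $d(\varphi_g(x),\varphi_g(y))>\beta$) into the finite set $F$; so the claim ``such a $g$ can be taken in $F$'' is unsupported. Your fallback diagonal argument does not close this hole: if $m_n=\beta$ for every $n$, the minimizing pairs $(x_n,y_n)\in\Delta_{\epsilon_n}$ are separated only by $\epsilon_n\to 0$, so after passing to convergent subsequences the limits $x$ and $y$ may coincide, and then there is no contradiction with $\beta$ being an expansive constant. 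The natural repair --- translating each pair by a group element to keep the two points uniformly separated --- is unavailable in your framework, because your hypothesis ``$d(\varphi_g(x_n),\varphi_g(y_n))\leq\beta$ for $g\in F_n$'' is a bound over a finite set and is not translation invariant: after translating by $h_n$ it becomes a bound over $F_nh_n^{-1}$, which need not eventually contain any fixed $g\in G$. (A further, smaller gap: even granted $m_{n_0}>\beta$ for one $n_0$, your verification that $c'\in(\beta,m_{n_0})$ is an expansive constant needs $c'<m_n$ for arbitrarily small $\epsilon_n$, or else a translation step for pairs with $d(x,y)<\epsilon_{n_0}$, neither of which you establish.)

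The paper avoids all of this by not using Lemma~\ref{expc} at all. Since $\beta+\frac1m$ exceeds the supremum, it is not an expansive constant, so there are $x_m'\neq y_m'$ with $d(\varphi_g(x_m'),\varphi_g(y_m'))<\beta+\frac1m$ \emph{for every} $g\in G$; this bound over all of $G$ is invariant under replacing the pair by $(\varphi_{g_m}(x_m'),\varphi_{g_m}(y_m'))$, so one may translate by a $g_m$ chosen (via some fixed expansive constant $\delta$) to keep the pair $\delta$-separated, pass to limits by compactness to get $x\neq y$, and conclude $d(\varphi_g(x),\varphi_g(y))\leq\beta$ for every $g$, i.e.\ $\beta$ is not an expansive constant. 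If you want to salvage your scheme, the fix is in this spirit: keep the separation threshold fixed (say $\epsilon\leq\beta$, using compactness of $\Delta_\epsilon$) while letting only the finite sets $F_n$ grow, and use the translation trick to reduce arbitrary distinct pairs to pairs in $\Delta_\epsilon$; as written, tying $\epsilon_n\to 0$ to $F_n$ is what makes the limiting pair collapse onto the diagonal.
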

\begin{proof}
For every $m\in \mathbb{N}$, $\beta + \frac{1}{m}$ is not an expansive constant $\varphi$ and therefore there is $x_m'\neq y_m'$ such that for all $g\in G$
$$d(\varphi_g(x_m'), \varphi_g(y_m'))<\beta + \frac{1}{m}.$$
Let $\delta>0$ be an expansive constant for $\varphi$. For every $m$, using expansivity of $\varphi$ there is $g_m\in G$ such that
$$d(\varphi_{g_m}(x_m'), \varphi_{g_m}(y_m'))>\delta.$$
Set $x_m=\varphi_{g_m}(x_m')$ and $y_m=\varphi_{g_m}(y_m')$. We assume that the sequence $\{x_m\}$ and  $\{y_m\}$ converge to  say, $x$ and $y$ respectively. Note that $x\neq y$.
Further, for  $g\in G$ and $\alpha>0$ choose $p,\: q\in \mathbb{N}$ and $\eta>0$ such that $\frac{1}{p}<\frac{\alpha}{3}$, $d(x, x_{g_q})<\eta$, $d(y, y_{g_q})<\eta$ and satisfying
\begin{equation}
d(a,b)<\eta \Rightarrow d(\varphi_g(a), \varphi_g(b))< \frac{\alpha}{3}.
\end{equation}
Therefore
$$
d(\varphi_g(x), \varphi_g(y)) \leq d(\varphi_g(x), \varphi_g(y_{q}'))+ d(\varphi_g(y_{q}'),\varphi_g(x_{q}'))+d(\varphi_g(x_{q}'), \varphi_g(y))   \leq  \alpha +\beta
$$
Above inequality, implies that there exist $x\neq y$ in $X$ such that for
any $g\in G$,   $d(\varphi_g(x), \varphi_g(y))\leq\beta$ and therefore $\beta$ is not expansive constant.
\end{proof}

\medskip
\noindent Note that Theorem \ref{const} is a generalization of Theorem 6 proved in \cite{bfb}. We now recall the definition of covering map.
\medskip
\begin{definition}
Let $X$, $Y$ be metric spaces. A continuous surjective map $f :X \longrightarrow Y$ is called a \emph{covering map} if there is an open cover $\{U_{\alpha}\}$ of $Y$  such that for every $\alpha$, $f^{-1}(U_{\alpha})$ is a disjoint union of open sets in $X$ and each of this open subset of $X$ is mapped homeomorphically onto $U_{\alpha}$ by $f$. It $X=Y$, then we say $f$ is a self--covering map.
\end{definition}

\medskip
\noindent Recall, a continuous surjection $f:X \longrightarrow Y$ is called a \emph{local homeomorphism} if for each $x \in X$ there is an open neighbourhood $U_x$ of $x$ such that $f(U_x)$ is open in $Y$ and $f_{| U_x} ; U_x \longrightarrow f(U_x)$ is a homeomorphism. Obviously, every covering map is local homeomorphism, but in general converse is not true. Note that converse is true if the space $X$ is a compact metric space. For the completion we give the proof of the following Lemma, which is required to prove the next Theorem.
%local homeomorphism   from a compact space to a connected Hausdorff space is a covering map.It can to see that if $X$ is a compact space and $\varphi:X\rightarrow Y$ is a %covering map then the map $g:Y\rightarrow \mathbb{N}$  defined by $g(x)= Card \varphi^{-1}(x)$ is continuous, thus if $Y$ is connected then  covering map $\varphi: X\rightarrow %Y$ is $k$- to- one map for some $k\in\mathbb{N}$.

\bigskip
\begin{lemma}\label{cover}
Let $(X,d)$ and $(Y, \rho)$ be two metric spaces. Suppose $f:X\longrightarrow Y$ is a covering map. If $X$ is compact,  then there is a real number  $\beta$, $\beta>0$, such that $d(x,y)>\beta$, whenever $f(x)=f(y)$ and $x\neq y$.
\end{lemma}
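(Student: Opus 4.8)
The plan is to argue by contradiction using a compactness argument of the type already used in Lemma \ref{expc}. Suppose no such $\beta > 0$ exists. Then for each $n \in \N$ there exist points $x_n \neq y_n$ in $X$ with $f(x_n) = f(y_n)$ and $d(x_n, y_n) < 1/n$. Since $X$ is compact, after passing to a subsequence we may assume $x_n \to z$ for some $z \in X$; since $d(x_n, y_n) \to 0$, also $y_n \to z$. By continuity of $f$, $f(z) = \lim f(x_n) = \lim f(y_n)$, which is consistent but gives no contradiction yet — we still need to exploit the local homeomorphism property of $f$ near $z$.

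The key step is to use that $f$ is a covering map at the limit point $z$. Pick an element $U_\alpha$ of the given open cover of $Y$ with $f(z) \in U_\alpha$. Then $f^{-1}(U_\alpha)$ is a disjoint union of open sets, each mapped homeomorphically onto $U_\alpha$; let $V$ be the unique sheet (one of these open sets) containing $z$. Since $x_n \to z$ and $y_n \to z$ and $V$ is open, for all sufficiently large $n$ we have $x_n, y_n \in V$. But $f_{|V} : V \to U_\alpha$ is injective, while $f(x_n) = f(y_n)$, forcing $x_n = y_n$, contradicting $x_n \neq y_n$. This contradiction establishes the existence of $\beta$.

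Alternatively, one may phrase the same argument via the Lebesgue number lemma: compactness of $X$ together with the open cover $\{f^{-1}(U_\alpha) \text{ decomposed into sheets}\}$ of $X$ yields a Lebesgue number $\beta > 0$ such that any two points at distance $\leq \beta$ lie in a common sheet, hence cannot have the same image unless they coincide. I would present whichever is cleaner; the contradiction-with-subsequence version is the most transparent and mirrors the style of the preceding lemma.

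I do not expect any serious obstacle here; the only point requiring mild care is making sure that the sheet $V$ containing the limit $z$ eventually contains the tails of \emph{both} sequences $\{x_n\}$ and $\{y_n\}$, which is immediate from openness of $V$ and convergence of both sequences to $z$. The compactness hypothesis on $X$ is used exactly once — to extract a convergent subsequence (equivalently, to produce a Lebesgue number) — and the argument does genuinely fail without it, consistent with the remarks preceding the lemma about local homeomorphisms that are not covering maps.
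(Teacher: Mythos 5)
Your main argument (assume no such $\beta$, extract convergent subsequences $x_n, y_n \to z$ by compactness, and contradict injectivity of $f$ on a neighbourhood of $z$ coming from the covering/local homeomorphism structure) is correct and is essentially the same proof as the paper's, which likewise argues by contradiction with sequences $d(x_n,y_n)<\tfrac{1}{n}$ and then uses that $f$ cannot be one--one on any neighbourhood of the common limit. Your Lebesgue-number reformulation is also valid and arguably cleaner, but it is only a repackaging of the same compactness argument.
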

\begin{proof}
Given that $f$ is a  covering map and therefore $f$ is a local homeomorphism. Hence,  for each $z\in X$, there is an open set $U_z$ of $z$ such that $f(U_z)$ is open in $Y$ and $f_{|U_z} : U_z \longrightarrow f(U_z)$ is a homeomorphism. If possible, suppose  there are sequences $\{x_n\}$ and $\{y_n\}$ in $X$ such that $f(x_n)=f(y_n)$, $x_n \neq y_n$ and $d(x_n, y_n)<\frac{1}{n}$. We denote the convergent subsequences again by  $\{x_n\}$, and $\{y_n\}$. If $\{x_n\}$ converges to $x$ and $\{y_n\}$ converges to $y$, then using local homeomorphism of $f$ and $f(x_n)=f(y_n)$, we obtain $x=y$. Thus for every neighbourhood $U_x$ of $x$ (and hence of $y$), there exists $x_n$, $y_n$ in $U_x$ with $x_n \neq y_n$ and $f(x_n)=f(y_n)$. But this implies that there is no neighbourhood $U_x$ of $x$ such that $f_{|U_x}$ is one--one and hence for every neighbourhood $U_x$ of $x$ $f_{|U_x} : U_x \longrightarrow f(U_x)$ is not a homeomorphism. This is a contradiction.
\end{proof}

\bigskip
\noindent In the following theorem we obtain condition for expansivity of an action using the covering map.
\medskip
\begin{theorem}\label{covering}
Let $(X,d)$ and $(Y, \rho)$ be compact metric spaces. Suppose that \linebreak $\varphi : G\times X \longrightarrow X$ is an uniformly continuous action of $G$ on $X$ and \linebreak $\psi : G\times Y\longrightarrow Y$ is an expansive action of $G$ on $Y$. Let  $f:X\longrightarrow Y$ be a covering map.  If for each $g \in G$, $f \circ \varphi_g=\psi_g\circ f$, then  $\varphi$ is also an  expansive action.
\end{theorem}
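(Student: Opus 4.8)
The plan is to transfer the expansive constant for $\psi$ on $Y$ down to $X$, using the covering map $f$ both to pull back the separation given by $\psi$-expansivity and, via Lemma~\ref{cover}, to control pairs of points that $f$ identifies. First I would fix an expansive constant $c>0$ for $\psi$ and let $\beta>0$ be the constant supplied by Lemma~\ref{cover}, so that $d(x,y)>\beta$ whenever $f(x)=f(y)$ and $x\neq y$. By the Lebesgue-number argument for the covering $\{U_\alpha\}$ of $Y$ (recall $Y$ is compact), there is a $\lambda>0$ such that every $\lambda$-ball in $Y$ lies in some evenly covered $U_\alpha$; moreover, since each sheet of $f^{-1}(U_\alpha)$ maps homeomorphically onto $U_\alpha$ and $X$ is compact, we may choose a modulus $\gamma>0$ with the property that if $\rho(f(p),f(q))<\lambda$ and $d(p,q)<\gamma$ then $p$ and $q$ lie in a common sheet, whence $p=q$ exactly when $f(p)=f(q)$. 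Shrinking, assume $\gamma<\beta$.

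Next I would use uniform continuity of $\varphi$ to pick $\delta>0$ with $d(a,b)<\delta \Rightarrow d(\varphi_g(a),\varphi_g(b))<\gamma$ for all $g\in G$ simultaneously (this is exactly where the uniform-continuity hypothesis on $\varphi$ is essential — an ordinary continuous action would give a $\delta$ depending on $g$). I claim $\delta' := \min\{\delta,\beta\}$ is an expansive constant for $\varphi$. Take $x\neq y$ in $X$. If $f(x)=f(y)$, then Lemma~\ref{cover} gives $d(x,y)>\beta\geq\delta'$, and we are done with $g=e$. Otherwise $f(x)\neq f(y)$ in $Y$, so by expansivity of $\psi$ there is $g\in G$ with $\rho(\psi_g(f(x)),\psi_g(f(y)))>c$. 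Using the intertwining relation $f\circ\varphi_g=\psi_g\circ f$, this reads $\rho(f(\varphi_g(x)),f(\varphi_g(y)))>c$.

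It remains to convert this $\rho$-separation of $f(\varphi_g(x))$ and $f(\varphi_g(y))$ in $Y$ into a $d$-separation of $\varphi_g(x)$ and $\varphi_g(y)$ in $X$, and this is the step I expect to be the main obstacle, since $f$ need not be expanding. Suppose, for contradiction, that $d(\varphi_g(x),\varphi_g(y))\leq\delta'\leq\delta$. Then by the choice of $\delta$ we would need $\gamma$ to control things — but more to the point, I would instead argue with a genuinely uniform constant: replace $c$ by $\min\{c,\lambda\}$ from the start so that the conclusion $\rho(f(\varphi_g(x)),f(\varphi_g(y)))>c$ still holds, and then observe that if additionally $d(\varphi_g(x),\varphi_g(y))<\gamma$, the pair $(\varphi_g(x),\varphi_g(y))$ would satisfy $\rho(f(\varphi_g x),f(\varphi_g y))<\lambda$ and $d(\varphi_g x,\varphi_g y)<\gamma$, forcing them into one sheet and hence $f(\varphi_g x)\neq f(\varphi_g y)$ to be impossible once they are that close — contradiction with $\rho(f(\varphi_g x),f(\varphi_g y))>c$ provided $c$ was taken $\leq$ the diameter gap on a single sheet. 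Thus $d(\varphi_g(x),\varphi_g(y))\geq\gamma$, and since $\gamma$ can be taken as the expansive constant (reconciling it with $\beta$ by taking the minimum), $\varphi$ is expansive. The delicate point to write carefully is the quantifier order: all of $\lambda,\beta,\gamma,\delta$ must be chosen uniformly over $X$ and over $g\in G$ before $x,y$ are introduced, and compactness of $X$ together with uniform continuity of $\varphi$ is precisely what makes this possible.
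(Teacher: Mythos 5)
Your overall skeleton (split into the cases $f(x)=f(y)$ and $f(x)\neq f(y)$, use Lemma~\ref{cover} for the first case, and push the $\psi$-separation of $f(x),f(y)$ back through the relation $f\circ\varphi_g=\psi_g\circ f$ in the second) is sound and is essentially the contrapositive arrangement of the paper's argument, which proceeds by contradiction with the same two ingredients. But the step you yourself flag as the main obstacle --- converting $\rho(f(\varphi_g(x)),f(\varphi_g(y)))>c$ into a definite lower bound on $d(\varphi_g(x),\varphi_g(y))$ --- is where your write-up actually breaks, in two concrete ways. First, the implication ``$d(\varphi_g(x),\varphi_g(y))<\gamma \Rightarrow \rho(f(\varphi_g(x)),f(\varphi_g(y)))<\lambda$'' does not follow from how you chose $\gamma$: your $\gamma$ was defined with $\rho(f(p),f(q))<\lambda$ as a \emph{hypothesis}, not a conclusion, so you have no control on $\rho(f(p),f(q))$ from $d(p,q)<\gamma$ unless you separately invoke uniform continuity of $f$, which you never do. Second, the intended contradiction is logically backwards: if $\varphi_g(x)\neq\varphi_g(y)$ lie in a common sheet, then injectivity of $f$ on that sheet gives $f(\varphi_g(x))\neq f(\varphi_g(y))$ --- lying in one sheet does not make ``distinct images impossible,'' and it gives no upper bound on $\rho(f(\varphi_g(x)),f(\varphi_g(y)))$ either, because the evenly covered sets $U_\alpha$ come with the covering map and need not have small diameter; your proviso ``provided $c$ was taken $\leq$ the diameter gap on a single sheet'' is not a quantity you have arranged or can arrange without refining the cover (the paper does exactly this refinement, choosing a finite cover $\{U_i\}$ of $X$ with $\mathrm{diam}\, f(U_i)<c/2$ and a Lebesgue number for it).

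The repair is short and makes the whole sheet discussion unnecessary at this point: since $X$ is compact and $f$ continuous, $f$ is uniformly continuous, so there is $\gamma>0$ with $d(p,q)<\gamma\Rightarrow\rho(f(p),f(q))\leq c$; then $\rho(f(\varphi_g(x)),f(\varphi_g(y)))>c$ forces $d(\varphi_g(x),\varphi_g(y))\geq\gamma$, and $\min\{\gamma,\beta\}$ is an expansive constant for $\varphi$ (your $f(x)=f(y)$ case with $g=e$ and Lemma~\ref{cover} is fine as written). Two further points to fix: your reading of ``uniformly continuous action'' as a single $\delta$ working for all $g\in G$ is equicontinuity of $\{\varphi_g\}$, which together with expansivity would force $X$ to be finite, so it cannot be what the hypothesis means --- and in any case that $\delta$ plays no role in your final argument, which is why your claimed expansive constant silently changes from $\min\{\delta,\beta\}$ to $\gamma$ between the middle and the end; state one constant and stick to it. For comparison, the paper assumes $\varphi$ not expansive, produces $x_\epsilon\neq y_\epsilon$ whose $\varphi$-orbits stay $\epsilon$-close with $\epsilon<\min\{\epsilon_0,\beta/2\}$, uses the small-image cover of $X$ to conclude the $\psi$-orbits of $f(x_\epsilon),f(y_\epsilon)$ stay within $c$, hence $f(x_\epsilon)=f(y_\epsilon)$ by expansivity of $\psi$, and then contradicts Lemma~\ref{cover}; your direct version, once patched with uniform continuity of $f$, is an equally valid and arguably cleaner route.
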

\begin{proof}
For $\delta>0$ and $x\in X$, denote
$$ \Gamma_{\delta}(x, \varphi)= \{t \in X: d(\varphi_g(x), \varphi_g(t))<\delta, \forall g\in G\}$$
Let $c$ be an expansivity constant for $\psi$. Then for each $y \in Y$, $\Gamma_c(y, \psi)=\{y\}$. If possible suppose $\varphi$ is not  expansive. Then for each $\epsilon>0$ there correspond an $x_{\epsilon}\in X$, such that $\Gamma_{\epsilon}(x_{\epsilon},\varphi)\neq \{x_{\epsilon}\}$. Further, since $f$ is a covering map, there is a  $\beta>0$ satisfying the Lemma \ref{cover}. Let $\epsilon$ be such that $0<\epsilon<\frac{\beta}{2}$. Then for every $x,y\in \Gamma_{\epsilon}(x_{\epsilon},\varphi)$ we have $d(x,y)< \beta$. Therefore by Lemma \ref{cover} it follows that $f(x)\neq f(y)$.

\bigskip
\noindent  Choose a finite open covering $\{U_i\}$ of $X$ such that $diam (f(U_i))<\frac{c}{2}$ for all $i=1,\ldots, n$. Note that such a finite open cover exists as both $X$,  $Y$ are compact and $f$ is a continuous surjective map. Further, let $\epsilon_0>$ be a Lebesgue number of the covering $\{U_i\}$. Let $\epsilon$ be such that $0 < \epsilon < min\left\{\epsilon_0, \frac{\beta}{2}\right\}$.  Then for every $g\in G$ and every $y_{\epsilon}\in \Gamma_{\epsilon}(x_{\epsilon},\varphi)$, it follows that
   $$\varphi_g(y_{\epsilon}), \varphi_g(x_{\epsilon})\in U_i, \;\; \mbox{ for some } i$$
 But $diam (f(U_i))< \frac{c}{2}$. Therefore,
   $$\rho\left(f(\varphi_g(y_{\epsilon})), f(\varphi_g(x_{\epsilon}))\right)<\frac{c}{2}$$
Using $f\circ \varphi_g=\psi_g\circ f$, we obtain that for every $g \in G$
\begin{equation}\label{2}
d\left(\psi_g(f(x_{\epsilon})), \psi_g(f(y_{\epsilon}))\right)<\frac{c}{2}< c
\end{equation}
But $\psi$ is expansive with expansive constant $c$. Therefore above inequality implies $f(x_{\epsilon})=f(y_{\epsilon})$. Also, $y_{\epsilon}\in  \Gamma_{\epsilon}(x_{\epsilon},\varphi)$ and $\epsilon < \frac{\beta}{2}$. Therefore, by Lemma \ref{cover}, it follows that $f(x_{\epsilon})\neq f(y_{\epsilon})$, which is a contradiction. Hence the proof.
\end{proof}

\bigskip
\noindent Proof of the following properties follows easily, so we only state them. Recall, a point $x \in X$ is a fixed point of an action $\varphi : G\times X\longrightarrow X$ if for each $g \in G$, $\varphi_g(x)=x$. Let $Fix \:\varphi$ denote the set of all fixed points of $\varphi$.
\begin{proposition}
Let $(X,d)$ be a compact metric spaces and let $G$ be a group. Suppose $\varphi : G\times X\longrightarrow X$   is an expansive action. Then following hold:
\begin{itemize}
  \item
  $Fix \:\varphi$ is a finite set.
  \item Let $Y$ be a closed $\varphi -$invariant (i.e. $\varphi_g(Y)\subseteq Y$, for all $g \in G$) subset of $X$. Then $\varphi : G\times Y \longrightarrow Y$ is also expansive.
\end{itemize}

\end{proposition}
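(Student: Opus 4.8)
The plan is to prove both assertions by direct arguments based on a fixed expansive constant $c>0$ for $\varphi$, with compactness of $X$ entering only through total boundedness in the first part.

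For the first assertion, I would begin by observing that if $x,y\in\Fix\varphi$ with $x\neq y$, then $\varphi_g(x)=x$ and $\varphi_g(y)=y$ for every $g\in G$, so that $d(\varphi_g(x),\varphi_g(y))=d(x,y)$ for all $g$. Expansivity then forces $d(x,y)>c$, i.e.\ any two distinct fixed points lie at distance greater than $c$. Since $X$ is compact it is totally bounded, so it cannot contain an infinite subset whose points are pairwise more than $c$ apart; hence $\Fix\varphi$ must be finite.

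For the second assertion, I would first check that $\varphi$ genuinely restricts to an action of $G$ on $Y$: from $\varphi$-invariance we have $\varphi_g(Y)\subseteq Y$ for all $g\in G$, and applying this to $g^{-1}$ gives $\varphi_{g^{-1}}(Y)\subseteq Y$, so $Y\subseteq\varphi_g(Y)$ and therefore $\varphi_g(Y)=Y$; thus each $\varphi_g$ restricts to a homeomorphism of $Y$ and the two axioms of a group action are inherited. Then, given $x,y\in Y$ with $x\neq y$, the pair is also distinct in $X$, so expansivity of $\varphi$ on $X$ produces $g\in G$ with $d(\varphi_g(x),\varphi_g(y))>c$; this same inequality witnesses expansivity of the restricted action, so $c$ remains an expansive constant.

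I do not expect a genuine obstacle here — both statements reduce to one-line observations once the expansive constant is fixed. The only point requiring a moment's care is the verification that $\varphi|_{G\times Y}$ is an honest action, which is why one must invoke invariance for both $g$ and $g^{-1}$ to conclude $\varphi_g(Y)=Y$ rather than merely $\varphi_g(Y)\subseteq Y$; closedness of $Y$ is not actually needed for either part, and could be dropped.
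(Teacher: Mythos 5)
Your proposal is correct. The paper states this proposition without proof (``Proof of the following properties follows easily, so we only state them''), and your argument is precisely the intended easy one: distinct fixed points satisfy $d(\varphi_g(x),\varphi_g(y))=d(x,y)$ for all $g$, so expansivity forces them to be $c$-separated and total boundedness of the compact space $X$ gives finiteness; and for the restriction to an invariant subset the same expansive constant works verbatim, your observation that invariance under $g$ and $g^{-1}$ yields $\varphi_g(Y)=Y$ (and that closedness of $Y$ is not actually used) being a correct refinement of the statement.
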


\section{Orbit Expansive Group Action}
\medskip
\noindent Let $(X, \tau)$ be a topological space. For a subset $A\subseteq X$ and a cover $\mathcal{U}$ of $X$ we write $A\prec \mathcal{U}$ if there exists $C\in \mathcal{U}$ such that $A\subseteq C$. If $\mathcal{V}$ is a family of subsets of $X$, then $\mathcal{V}\prec \mathcal{U}$ means that $A\prec \mathcal{U}$ for all $A\in\mathcal{V}$. If, in addition $\mathcal{V}$ is a cover of $X$, then $\mathcal{V}$ is said to be \emph{refinement of} $\mathcal{U}$. Join of two covers $\mathcal{U}$ and $\mathcal{V}$ is a cover given by $\mathcal{U}\vee \mathcal{V}=\{U\cap V| U\in\mathcal{U}, V\in\mathcal{V}\}$. The notion for orbit expansivity for $\Z-$action was first defined in \cite{art}. In the following we define the concept for action of any group $G$.

 \medskip
 \begin{definition}
 Let $\varphi:G\times X \longrightarrow X$ be a continuous action of group $G$ on a topological space $X$. Then $\varphi$ is said to be \emph{an orbit  expansive action} if there is a finite open cover $\mathcal{U}$ of $X$ such that if  for each $g \in G$, the set $\{\varphi_g(x), \varphi_g(y)\}\prec \mathcal{U}$, then $x=y$. The cover $\mathcal{U}$ of $X$  is called \emph{an orbit  expansive covering of} $\varphi$.
 \end{definition}

\medskip
\noindent Equivalently, $\varphi$ is an orbit expansive action if whenever $x, y \in X$ with $x\neq y$ there exists $g \in G$ such that $\{\varphi_g(x), \varphi_g(y)\} \not\prec \mathcal{U}$.  It is clear that if $\mathcal{U}$ is an orbit  expansive covering of $\varphi$ and $\mathcal{V}$ is a refinement of $\mathcal{U}$, then $\mathcal{V}$ is an orbit  expansive covering of $\varphi$.  Since $\mathcal{U}\vee \mathcal{V}$ is a refinement of $\mathcal{U}$, it follows that if $\mathcal{U}$ is an orbit  expansive covering then so is $\mathcal{U}\vee \mathcal{V}$,  for every open cover $\mathcal{V}$ of $X$. Recall, for each $g \in G$, $\varphi_g : X \longrightarrow X$ is a homeomorphism. Therefore, $\varphi_g(\mathcal{U})$ is an open cover of $X$. It is easy to verify that if $\mathcal{U}$ is an orbit expansive covering of $\varphi$, then so is $\varphi_g(\mathcal{U})$. Further,  $\vee_{g\in G} \; \varphi_g(\mathcal{U})$ is also an orbit  expansive covering of $\varphi$. In the following theorem we characterize expansive group action using orbit expansive group action.

\medskip
 \begin{theorem}\label{equ}
 Let $(X, d)$ be a compact metric space and let $\varphi:G\times X \longrightarrow X$ be an uniformly continuous action of $G$ on $X$. Then $\varphi$ is orbit  expansive if and only if it is   expansive.
 \end{theorem}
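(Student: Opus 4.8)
The plan is to prove the two implications separately, the bridge in each case being the standard correspondence, available on a compact metric space, between an expansive constant and a finite open cover: small metric balls in one direction, a Lebesgue number in the other. Throughout I would work with the ``equivalently'' reformulations already recorded in the paper, namely that $\varphi$ is expansive with constant $c$ exactly when $d(\varphi_g(x),\varphi_g(y))\le c$ for all $g\in G$ forces $x=y$, and $\varphi$ is orbit expansive with covering $\mathcal{U}$ exactly when $\{\varphi_g(x),\varphi_g(y)\}\prec\mathcal{U}$ for all $g\in G$ forces $x=y$. Let me note at the outset that compactness of $X$ is what carries the argument; the uniform continuity hypothesis does not appear to be needed for this particular equivalence, though I would leave it in as part of the standing setup.

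For the implication ``expansive $\Rightarrow$ orbit expansive'' I would start from an expansive constant $c>0$. By compactness of $X$ the open cover $\{B(z,c/4):z\in X\}$ has a finite subcover $\mathcal{U}=\{B(z_1,c/4),\dots,B(z_n,c/4)\}$, which is a finite open cover of $X$; I claim it is an orbit expansive covering of $\varphi$. Indeed, if $x,y\in X$ and $\{\varphi_g(x),\varphi_g(y)\}\prec\mathcal{U}$ for every $g\in G$, then for each $g$ both points lie in a common ball of radius $c/4$, so $d(\varphi_g(x),\varphi_g(y))<c/2<c$, and expansivity of $\varphi$ gives $x=y$.

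For the converse I would take a finite orbit expansive covering $\mathcal{U}$ and let $\lambda>0$ be a Lebesgue number for it (available since $X$ is a compact metric space): every subset of $X$ of diameter less than $\lambda$ lies in some member of $\mathcal{U}$. I claim $c=\lambda/2$ is an expansive constant for $\varphi$. If $d(\varphi_g(x),\varphi_g(y))\le c$ for all $g\in G$, then for each $g$ the two-point set $\{\varphi_g(x),\varphi_g(y)\}$ has diameter at most $c<\lambda$, hence $\{\varphi_g(x),\varphi_g(y)\}\prec\mathcal{U}$; orbit expansivity then forces $x=y$.

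I do not expect a genuine obstacle here: the proof is essentially a dictionary translation between the metric and covering formulations of expansivity. The two places that call for a little care are making sure the cover produced in the first implication is genuinely finite, which is precisely where compactness of $X$ is used, and respecting the strict inequality in the Lebesgue number lemma, which is why I take $c=\lambda/2$ rather than $c=\lambda$. If a cleaner write-up is wanted, one can isolate the two elementary facts ``two points in a ball of radius $c/4$ lie within $c$ of each other'' and ``a set of diameter less than $\lambda$ refines $\mathcal{U}$'', after which each implication becomes a single line.
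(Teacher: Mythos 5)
Your proposal is correct and follows essentially the same route as the paper: small metric balls (radius $c/4$ versus the paper's $c/2$) give the orbit expansive covering in one direction, and a Lebesgue number of the covering gives the expansive constant in the other. Your side remark that uniform continuity is not actually used is also accurate, as the paper's own argument never invokes it either.
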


\begin{proof}
    Suppose $\varphi$ is expansive with expansive constant $c$.  Since $X$ is a compact metric space, it follows that  there are $x_1, x_2, \dots, x_n\in X$ such that $\mathcal{V}=\{B(x_i, \frac{c}{2}): i=1, \ldots, n\}$ is a finite open cover for $X$. We show that $\mathcal{V}$ is an orbit expansive covering of $\varphi$.
    For $c>0$ and $x\in X$, denote
$$ \Gamma_c(x, \varphi)= \{t \in X: d(\varphi_g(x), \varphi_g(t))<c, \forall g\in G\}$$
    Then expansivity of $\varphi$ implies that for each $x \in X$, $\Gamma_c(x, \varphi)=\{x\}$.
\medskip
\noindent    For every $g \in G$, let $\{\varphi_g(x), \varphi_g(y)\}\prec \mathcal{V} $.  Then there is $B \in \mathcal{V}$, such that $\{\varphi_g(x), \varphi_g(y)\} \subseteq B$. But $diam \; B < c$. Therefore for each $g \in G$,
$$d(\varphi_g(x), \varphi_g(y)) < c.$$
This implies $y \in \Gamma_c(x, \varphi)$. But $\Gamma_c(x, \varphi)=\{x\}$. Therefore $x=y$. Hence $\varphi$ is orbit expansive.

\bigskip
\noindent Conversely, suppose $\varphi$ is orbit expansive with orbit expansive covering $\mathcal{V}$. Since $X$ it compact, without loss of generality we can assume that $\mathcal{V}$ is a finite open covering of $X$. Let $\delta>0$ be Lebesgue number for open cover $\mathcal{V}$. Then we show that $\varphi$  is expansive with expansive constant $c$, where $c$ is such that $0<c<\frac{\delta}{2}$.  Let every $x, y\in X$ be such that for each $g \in G$,
$$d(\varphi_g(x), \varphi_g(y)) < c. $$
Then since $\delta$ is a Lebesgue number for $\mathcal{V}$, it follows that there is $U \in \mathcal{V}$, such that $\{\varphi_g(x), \varphi_g(y)\} \subseteq U$. Therefore, for each $g \in G$,
$$\{\varphi_g(x), \varphi_g(y)\}\prec \mathcal{V}.$$
But $\mathcal{V}$ is orbit expansive covering of $\varphi$. Therefore $x=y$. Hence $\varphi$ is expansive with expansive constant $c$.
 \end{proof}

\medskip
\noindent We now give an example of an orbit expansive group action on a non--Hausdorff $T_1-$space. We use technique in \cite[Example 3.13]{art} and extend it. Recall, a continuum is a connected compact non--empty metric space. Further, a continuum $X$ is said to be \emph{1--arcwise connected continuum} if for any $x, y \in X$ with $x\neq y$, there is a unique arc in $X$ with end points $x$ and $y$. We recall the following Theorem from \cite{shi}.

\medskip
\begin{lemma}\label{fix}
  Let $X$ be a 1--arcwise connected continuum and let $G$ be a nilpotent group. Then the action $\varphi:G\times X\longrightarrow X$ has a fixed point in $X$.
\end{lemma}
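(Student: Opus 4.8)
The plan is to argue by induction on the nilpotency class $c$ of $G$. For $c=0$ the group is trivial and any point of $X$ is fixed; for $c=1$, where $G$ is abelian, the statement is the (known) fixed point property of $1$--arcwise connected continua for commuting homeomorphisms, which I would take as the base case. This base case is itself not routine: for a single homeomorphism $h$ one takes, via Zorn's lemma, a minimal nonempty closed $h$--invariant subcontinuum $M$ (a nested intersection of subcontinua of a compact space is again a subcontinuum, so chains have lower bounds) and shows $M$ is degenerate — e.g. by using a retraction of $M$ onto an arc $[x_{0},h(x_{0})]$ together with the fixed point property of the arc, which produces a proper $h$--invariant subcontinuum unless $h$ already fixes a point of $M$, contradicting minimality; the passage to an abelian group then exploits invariance of fixed--point sets and of the subcontinua they span, plus compactness of the hyperspace of subcontinua.

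For the inductive step, assume $c\ge 2$ and let $Z=Z(G)$ be the centre, a nontrivial normal subgroup with $G/Z$ of nilpotency class at most $c-1$. By the abelian case the common fixed--point set $F=\Fix(Z)$ is nonempty, and $F$ is $G$--invariant: for $g\in G$, $z\in Z$, $x\in F$ we get $z\cdot(gx)=g\cdot\bigl((g^{-1}zg)x\bigr)=gx$, since $g^{-1}zg\in Z$. If one can extract from $F$ a nonempty $G$--invariant $1$--arcwise connected subcontinuum $Y$ on which $Z$ acts trivially, then the $G$--action on $Y$ factors through $G/Z$; the inductive hypothesis then supplies a point of $Y$ fixed by $G/Z$, hence by $G$, completing the proof.

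Producing $Y$ is the delicate point, and I expect it to be the main obstacle. One cannot simply take the smallest subcontinuum $\overline{\mathrm{conv}}(F)$ containing $F$, since $Z$ need not act trivially on it: already for $h(t)=t^{2}$ on $X=[0,1]$ we have $\Fix(h)=\{0,1\}$ while $\overline{\mathrm{conv}}(\Fix(h))=[0,1]$. Instead one must analyse $F$ intrinsically — showing that $F$ decomposes into arc--components, that the closure of each such component is again a $1$--arcwise connected continuum, that $G$ permutes these components, and (this is exactly where nilpotency of $G$, rather than mere solvability, is used, to exclude infinite--dihedral--type obstructions) that some such component, or the minimal $G$--invariant subcontinuum contained in $\overline{\mathrm{conv}}(F)$, is $G$--invariant with $Z$ acting trivially on it; one takes that for $Y$. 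Verifying this extraction, and that closures of arc--components of $F$ stay $1$--arcwise connected continua, is the technical heart, and is carried out in \cite{shi}. (A shortcut through amenability of $G$ and a $\mathrm{CAT}(0)$ fixed point theorem is unavailable, since a $1$--arcwise connected continuum — not being locally connected in general — need not carry a compatible $\mathbb{R}$--tree metric.)
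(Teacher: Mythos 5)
The paper does not actually prove this lemma: it is recalled verbatim from Shi and Sun \cite{shi}, so the only ``proof'' in the paper is that citation. Measured against that, your proposal is right in spirit --- an induction on nilpotency class through the centre, with the single-homeomorphism/abelian case as the base, is the kind of argument carried out in \cite{shi} --- but as a standalone proof it has genuine gaps, which you yourself partly acknowledge. The base case cannot be treated as routine or as independently ``known'': for a single homeomorphism the fixed point theorem is Mohler's, and its proof is not the minimal-invariant-subcontinuum-plus-retraction argument you sketch. A uniquely arcwise connected continuum need not be locally connected, and in general there is no retraction onto the arc $[x_0,h(x_0)]$, so that sketch breaks down. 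The passage from one homeomorphism to a whole abelian group is also nontrivial, because the fixed point set of one homeomorphism need not be connected (exactly the difficulty you point out later), so ``invariance of fixed-point sets plus compactness of the hyperspace'' does not by itself yield a common fixed point.

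The inductive step is sound up to the point you flag: $F=\Fix(Z)$ is nonempty (granting the abelian case) and $G$-invariant, the action on any subcontinuum of $F$ factors through $G/Z$, and a fixed point of that induced action would finish the proof. But producing a nonempty $G$-invariant $1$--arcwise connected subcontinuum inside $F$ --- $F$ may be disconnected, and the smallest subcontinuum containing it is not pointwise fixed by $Z$, as your $t\mapsto t^2$ example shows --- is precisely the technical heart, and you explicitly defer it to \cite{shi}. So the proposal is an accurate road map that terminates in the same citation the paper itself relies on, not an independent proof; deferring to \cite{shi} is acceptable here (the authors do exactly that), but the intermediate arguments you assert along the way, notably the retraction argument for the base case, should not be presented as if they were proofs.
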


\smallskip
\begin{example}
Let $X$ be a 1--arcwise connected continuum with no--isolated point and  metric $d$. Suppose the metric $d$ induce topology $\tau$ on $X$ and suppose $G$ is a nilpotent group. Let $\varphi:G\times X \longrightarrow X$ be an expansive group action with a fixed point $x_0\in X$. Note that existence of fixed point is guaranteed by Lemma \ref{fix}. Consider a new point $x_1$ such that $x_1\notin X$ and set $\overline{X}=X\cup \{x_1\}$. Define topology $\overline{\tau}$ on $\overline{X}$ as follows:
A subset $W$ of $\overline{X}$ is in $\overline{\tau}$ if
\begin{enumerate}
  \item $W \in \tau$ or
  \item $W= U\cup\{x_1\}$, where $ x_0\in U$ and $U\in \tau$ or
  \item $ W= \left(U\setminus\{x_0\}\right)\cup \{x_1\}$, where $x_0\in U$ and $U\in\tau$
\end{enumerate}
Then the space $(\overline{X}, \overline{\tau})$ is a non-Hausdorff $T_1-$space \cite{art}. Let $\psi:G\times \overline{X}\longrightarrow \overline{X}$ be an action of $G$ on $\overline{X}$  given by $\psi(g, x)= \varphi(g, x)$ if $x\in X$ and $\psi(g, x_1)=x_1$ for all $g\in G$. Since $\varphi$ is expansive group action on compact metric space $X$, it follows  by Theorem \ref{equ} that $\varphi$ is an orbit expansive action. Therefore  there is an orbit expansive covering $\mathcal{U}=\{U_1, U_2, \ldots, U_n\}$ of $\varphi$. Suppose $x_0\in U_n$. Set $U_{n+1}= \left(U_n-\{x_0\}\right)\cup \{x_1\}$. Then $U_{n+1}$ is open in $\overline{X}$.  We show that $\mathcal{V}=\{U_1, U_2, \ldots, U_n, U_{n+1}\}$ is an orbit expansive covering of $\psi$. For $x, y \in \overline{X}$, let $\{\psi_g(x), \psi_g(y)\}\prec \mathcal{V}$, for all $g\in G$. If $x, y \in X$, then using $\mathcal{U}$ is an orbit expansive covering of $\varphi$  implies that $x=y$.  Next, let $x=x_1$ and $y \in X$.   Then $\{\psi_g(y), x_1\}\subseteq U_{n+1}$. Therefore $y\neq x_0$. Further, $y \in X$ and also for all $g \in G$ $\psi_g(y) = \varphi_g(y)\in U_n$ will imply that $\{\varphi_g(y), \varphi_g(x_0)\}\subseteq U_n$ for all $g\in G$. But $\mathcal{U}$ is orbit expansive covering of $\varphi$.  Therefore $y=x_0$ which is a contradiction. Hence in any case $\{\psi_g(x), \psi_g(y)\}\prec \mathcal{V}$ for all $g\in G$ implies that $x=y$.
\end{example}

\medskip
\noindent In the following we prove that if there exists an orbit expansive group action on a topological space, then the space is always a $\mathrm{T}_1-$space. Note that this result to similar to the Proposition 2.5 of \cite{art}.

\medskip
 \begin{theorem}
  Let $\varphi:G\times X \longrightarrow X$ be a continuous action of group $G$ on a topological space $X$. If $\varphi$ is an orbit expansive group action, then $X$ is always $\mathrm{T}_1-$space.
 \end{theorem}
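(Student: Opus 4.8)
The plan is to verify the $T_1$ axiom directly, by producing for each ordered pair of distinct points a one-sided separating open set. Recall that $X$ is $T_1$ exactly when every singleton is closed, equivalently when for all $x,y\in X$ with $x\neq y$ there is an open set $U$ with $x\in U$ and $y\notin U$. So it suffices to establish this last property.

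First I would fix an orbit expansive covering $\mathcal{U}=\{U_1,\dots,U_n\}$ of $\varphi$, which exists by hypothesis, and take arbitrary $x,y\in X$ with $x\neq y$. By the equivalent formulation of orbit expansivity recorded just after the definition, there is some $g\in G$ with $\{\varphi_g(x),\varphi_g(y)\}\not\prec\mathcal{U}$; that is, no member of $\mathcal{U}$ contains both $\varphi_g(x)$ and $\varphi_g(y)$. Since $\mathcal{U}$ covers $X$, choose $U_i\in\mathcal{U}$ with $\varphi_g(x)\in U_i$; then necessarily $\varphi_g(y)\notin U_i$. Likewise, choosing $U_j\in\mathcal{U}$ with $\varphi_g(y)\in U_j$ forces $\varphi_g(x)\notin U_j$.

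Next I would pull these sets back along $\varphi_g$. As noted in the paper, for each $g\in G$ the map $\varphi_g:X\longrightarrow X$ is a homeomorphism, so $\varphi_g^{-1}(U_i)=\varphi_{g^{-1}}(U_i)$ is open in $X$. It contains $x$ (as $\varphi_g(x)\in U_i$) and it misses $y$ (as $\varphi_g(y)\notin U_i$). Symmetrically $\varphi_g^{-1}(U_j)$ is an open set containing $y$ and missing $x$. Hence $X\setminus\{y\}$ and $X\setminus\{x\}$ are neighbourhoods of $x$ and $y$ respectively; since $x,y$ were arbitrary distinct points, every singleton of $X$ is closed, so $X$ is a $\mathrm{T}_1$-space.

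I do not expect a real obstacle here: the whole content is the observation that an orbit expansive cover, transported through the homeomorphism $\varphi_g$, already separates $x$ from $y$. The only points requiring a little care are invoking the correct equivalent form of orbit expansivity (the ``$\not\prec$'' version) and using that each $\varphi_g$ is a homeomorphism, so that preimages of the members of $\mathcal{U}$ are again open.
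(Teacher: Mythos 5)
Your proposal is correct and follows essentially the same argument as the paper: use orbit expansivity to get $g$ with $\{\varphi_g(x),\varphi_g(y)\}\not\prec\mathcal{U}$, then pull a member of $\mathcal{U}$ containing $\varphi_g(x)$ back through the homeomorphism $\varphi_{g^{-1}}$ to separate $x$ from $y$. Your version is only slightly more explicit in also producing the symmetric open set around $y$, which the paper leaves implicit.
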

 \begin{proof}
 Let $x, y \in X$ with $x \neq y$. Since $\varphi$ is an orbit expansive action, there is an orbit expansive covering  $\mathcal{U}$ of $X$ such that for some $g \in G$
 $$\{\varphi_g(x), \varphi_g(y)\} \not \prec \mathcal{U}.$$
  But this implies that if for some $U\in \mathcal{U}$,  $\varphi_g(x)\in U$, then $x\in \varphi_{g^{-1}}(U)$ and $y\notin \varphi_{g^{-1}}(U)$. Also, $\varphi_{g^{-1}} : X \longrightarrow X$ is a homeomorphism. Therefore there is open set $\varphi_{g^{-1}}(U)$ of $X$ such that $x\in \varphi_{g^{-1}}(U)$ but $y\notin \varphi_{g^{-1}}(U)$. Hence $X$ is a $\mathrm{T}_1-$space.
 \end{proof}

\smallskip
 \noindent If $\mathcal{U}=\{U_1, U_2, \ldots, U_n\}$ is an orbit expansive covering of $\varphi:G\times X\longrightarrow X$, then for $Y\subseteq X$ with $G(Y)=Y$, the open cover  $\{U_1\cap Y, U_2\cap Y, \ldots, U_n\cap Y\}$ is an orbit  expansive covering of $\varphi_{|Y} :G\times Y\longrightarrow Y$. Thus if $\varphi$ is an orbit expansive action of $G$ on $X$ and $Y$ is a $G-$invariant subset of $X$, then $\varphi_{|Y}$ is also an orbit expansive action of $G$ on $Y$. Next, if $H$ is a subgroup of group $G$ and $\varphi_{|H}:H\times X\longrightarrow X$ is an orbit expansive action then so is $\varphi:G\times X\longrightarrow X$. In the following we show that converse is true if $H$ is a finite index subgroup of $G$.

\smallskip
 \begin{theorem}
 Let $(X, \tau)$ be a topological space and $\varphi:G\times X\longrightarrow X$ be an action of $G$ on $X$. Suppose $H$ is finite index subgroup of $G$. Then  $\varphi_{|H}:H\times X\longrightarrow X$ is orbit expansive action if $\varphi:G\times X \longrightarrow X$ is orbit expansive.
 \end{theorem}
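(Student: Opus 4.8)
The plan is to manufacture, out of an orbit expansive covering $\mathcal{U}$ of $\varphi$, an explicit finite open cover $\mathcal{W}$ of $X$ that witnesses orbit expansivity of $\varphi_{|H}$. First I would fix a complete set of representatives $g_1=e, g_2,\dots,g_k$ for the left cosets of $H$ in $G$; this set is \emph{finite} precisely because $i_G(H)<\infty$, and $G=\bigcup_{i=1}^{k} g_iH$. Since each $\varphi_{g_i^{-1}}:X\to X$ is a homeomorphism, each $\varphi_{g_i^{-1}}(\mathcal{U})$ is a finite open cover of $X$, hence so is the join $\mathcal{W}:=\bigvee_{i=1}^{k}\varphi_{g_i^{-1}}(\mathcal{U})$, and by construction $\mathcal{W}$ refines $\varphi_{g_i^{-1}}(\mathcal{U})$ for every $i$.

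Next I would check that $\mathcal{W}$ is an orbit expansive covering for $\varphi_{|H}$. Suppose $x,y\in X$ satisfy $\{\varphi_h(x),\varphi_h(y)\}\prec\mathcal{W}$ for all $h\in H$; I want to conclude $x=y$ by invoking orbit expansivity of the full action $\varphi$, so it suffices to show $\{\varphi_g(x),\varphi_g(y)\}\prec\mathcal{U}$ for every $g\in G$. Given $g\in G$, write $g=g_ih$ with $h\in H$, so that $\varphi_g=\varphi_{g_i}\circ\varphi_h$ and $\{\varphi_g(x),\varphi_g(y)\}=\varphi_{g_i}\left(\{\varphi_h(x),\varphi_h(y)\}\right)$. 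Because $\mathcal{W}$ refines $\varphi_{g_i^{-1}}(\mathcal{U})$, there is $U\in\mathcal{U}$ with $\{\varphi_h(x),\varphi_h(y)\}\subseteq\varphi_{g_i^{-1}}(U)$; applying the homeomorphism $\varphi_{g_i}$ yields $\{\varphi_g(x),\varphi_g(y)\}\subseteq U$, i.e. $\{\varphi_g(x),\varphi_g(y)\}\prec\mathcal{U}$. As $g$ was arbitrary and $\mathcal{U}$ is an orbit expansive covering of $\varphi$, we get $x=y$, so $\mathcal{W}$ is an orbit expansive covering of $\varphi_{|H}$.

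I do not expect a genuine obstacle here; the two points requiring care are (i) keeping $\mathcal{W}$ \emph{finite}, which is exactly where finiteness of the index is used (the analogous cover $\bigvee_{g\in G}\varphi_g(\mathcal{U})$ mentioned earlier is in general infinite), and (ii) the coset bookkeeping: writing $g=g_ih$ with the coset representative on the left forces $\varphi_g=\varphi_{g_i}\circ\varphi_h$ and hence the choice $\mathcal{W}=\bigvee_{i}\varphi_{g_i^{-1}}(\mathcal{U})$ (with the inverses of the representatives), rather than $\bigvee_{i}\varphi_{g_i}(\mathcal{U})$.
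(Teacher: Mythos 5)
Your proof is correct and follows the paper's overall strategy (fix coset representatives $g_1,\dots,g_k$ with $G=\bigcup_{i=1}^{k} g_iH$ and transport the orbit expansive covering $\mathcal{U}$ by the homeomorphisms $\varphi_{g_i^{-1}}$), but with one substantive difference: you take the \emph{join} $\mathcal{W}=\bigvee_{i=1}^{k}\varphi_{g_i^{-1}}(\mathcal{U})$, whereas the paper works with the plain union $\mathcal{V}=\{\varphi_{g_i^{-1}}(U_m)\colon i,m\}$ of those covers. This difference matters. With the union, the paper's step ``$\{\varphi_g(x),\varphi_g(y)\}\not\prec\mathcal{U}$ for $g=g_ih$ implies $\{\varphi_h(x),\varphi_h(y)\}\not\prec\mathcal{V}$'' does not follow as stated: a member of $\mathcal{V}$ containing $\{\varphi_h(x),\varphi_h(y)\}$ could be of the form $\varphi_{g_j^{-1}}(U_m)$ for a \emph{different} representative $g_j\neq g_i$, and nothing in the hypothesis about the single element $g_ih$ rules that out. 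Your join cover removes exactly this issue: since $\mathcal{W}$ refines every $\varphi_{g_i^{-1}}(\mathcal{U})$, the assumption $\{\varphi_h(x),\varphi_h(y)\}\prec\mathcal{W}$ for all $h\in H$ yields $\{\varphi_g(x),\varphi_g(y)\}\prec\mathcal{U}$ for \emph{every} $g\in G$, and orbit expansivity of $\varphi$ then gives $x=y$; finiteness of the index is used precisely to keep $\mathcal{W}$ finite, as you note, and the coset bookkeeping $\varphi_{g_ih}=\varphi_{g_i}\circ\varphi_h$ is handled correctly. So your argument is not only valid but also supplies the refinement step that the paper's ``it is now easy to show'' glosses over; it can be read as the natural repair of the paper's proof.
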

 \begin{proof}
Suppose that $\mathcal{U}=\{U_1, U_2, \ldots, U_n\}$ is an orbit expansive covering of $\varphi$. Since $H$ is finite index subgroup of $G$, there is $\{g_1, g_2, \ldots, g_k\}\subseteq G$ such that $G= \bigcup_{i=1}^{k}g_iH$. We show that $\mathcal{V}=\left\{V_m^i = g_i^{-1}U_m : m\in\{1,\ldots, n\}\;  \& \; i \in \{1, 2, \ldots, k\}\right\}$ is an orbit expansive covering of $\varphi_{|H}$. Orbit expansivity of $\varphi$ implies that there is $g \in G$ such that $$\{\varphi_g(x), \varphi_g(y)\} \not\prec \mathcal{U}.$$
For this $g \in G$, there is $i \in \{1, 2, \ldots, k\}$ and $h \in H$ such that $g=g_ih$. It is now easy to show that for this $h \in H$,
$$\{\varphi_h(x), \varphi_h(y)\} \not \prec \mathcal{V}.$$
Hence, $\varphi_{|H}$ is an orbit expansive action.
 \end{proof}

\medskip
\noindent Let $(X, d)$ and $(Y, \rho)$ be two topological spaces. Suppose $\varphi:G\times X \longrightarrow X$ and $\psi:G\times Y \longrightarrow Y$ are two uniformly continuous conjugate actions with conjugancy $h$. Therefore $h : X \longrightarrow Y$ is a homeomorphism with $ho\varphi_g=\psi_goh$ for all $g \in G$. If $\mathcal{U}$ is a finite open cover of $X$ such that $h(\mathcal{U})$ is an orbit  expansive covering of $\psi$, then $\mathcal{U}$ is an orbit  expansive covering of $\varphi$. We therefore have the following theorem:

\smallskip
 \begin{theorem}
 Let $(X, d)$ and $(Y, \rho)$ be two topological spaces and let $G$ be a group. Suppose $\varphi:G\times X \longrightarrow X$ and $\psi:G\times Y \longrightarrow Y$ are two uniformly continuous conjugate actions with conjugancy $h$. Then $\varphi$ is orbit  expansive if and only if so is $\psi$.
 \end{theorem}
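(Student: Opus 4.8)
The plan is to show, exactly as suggested in the remark immediately preceding the statement, that a conjugacy transports orbit expansive coverings from one action to the other, and then to obtain the two implications by symmetry. Throughout, the key facts are that $h$ is a homeomorphism (in particular a bijection carrying open covers to open covers of the same cardinality) and that it intertwines the actions via $h\circ\varphi_g=\psi_g\circ h$ for all $g\in G$.

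First I would prove the implication ``$\psi$ orbit expansive $\Rightarrow$ $\varphi$ orbit expansive''. Assume $\psi$ admits a finite orbit expansive covering $\mathcal{W}$ of $Y$. Since $h$ is a homeomorphism, $\mathcal{U}:=\{h^{-1}(W):W\in\mathcal{W}\}$ is a finite open cover of $X$ with $h(\mathcal{U})=\mathcal{W}$. To see that $\mathcal{U}$ is an orbit expansive covering of $\varphi$, take $x,y\in X$ with $\{\varphi_g(x),\varphi_g(y)\}\prec\mathcal{U}$ for every $g\in G$; then for each $g$ there is $U_g\in\mathcal{U}$ containing both $\varphi_g(x)$ and $\varphi_g(y)$, and applying $h$ together with $h\circ\varphi_g=\psi_g\circ h$ gives $\{\psi_g(h(x)),\psi_g(h(y))\}\subseteq h(U_g)\in\mathcal{W}$, i.e. $\{\psi_g(h(x)),\psi_g(h(y))\}\prec\mathcal{W}$ for all $g\in G$. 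Orbit expansivity of $\psi$ forces $h(x)=h(y)$, and injectivity of $h$ yields $x=y$. Hence $\mathcal{U}$ is an orbit expansive covering of $\varphi$, so $\varphi$ is orbit expansive.

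For the reverse implication I would simply note that $h^{-1}:Y\longrightarrow X$ is again a homeomorphism and, from $h\circ\varphi_g=\psi_g\circ h$, satisfies $h^{-1}\circ\psi_g=\varphi_g\circ h^{-1}$ for all $g\in G$; thus $h^{-1}$ is a conjugacy between $\psi$ and $\varphi$, and the argument of the previous paragraph, applied with the roles of $\varphi$ and $\psi$ interchanged, shows that $\psi$ is orbit expansive whenever $\varphi$ is. There is no genuine obstacle here: the proof is a direct verification, and the only point requiring a little care is bookkeeping — using $h$ in one direction and $h^{-1}$ in the other, and observing that because $h$ is bijective the image of a finite open cover is again a finite open cover, so the ``finite'' clause in the definition of orbit expansive covering is preserved.
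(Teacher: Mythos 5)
Your proof is correct and follows essentially the same route as the paper, which states the key fact in the remark preceding the theorem: the conjugacy $h$ transports orbit expansive coverings between the two actions via $h\circ\varphi_g=\psi_g\circ h$, with the converse obtained by the symmetric argument using $h^{-1}$. Your write-up just spells out the verification the paper leaves implicit.
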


\end{large}

\begin{thebibliography}{}
%
% and use \bibitem to create references. Consult the Instructions
% for authors for reference list style.
%
\bibitem{AH} N. Aoki and K. Hiraide, \textit{Topological Theory of Dynamical Systems},  North--Holland, Amsterdam, 1994.
\bibitem{art} M. Achigar, A. Artigue, I. Monteverde, \emph{Expansive homeomorphisms on non--Hausdorff spaces}, Top.  \& its  Appl. 207 (2016), 109--122.
\bibitem{ali} A. Barzanouni, \emph{Shadowing property on finitely generated group actions}, J. Dyn. Syst. Geom. Theor.  12(1)  (2014), 69--79.
\bibitem{bfb}  B. F. Bryant, \emph{Expansive self--homeomorphisms of a compact metric space}, Amer. Math. Monthly, Vol. 69, No. 5 (1962), 386--391.
\bibitem{Lee} N. P.Chung  and K. Lee, \emph{Topological stability and pseudo-orbit tracing property of group actions}, Volume 146, Number 3, (2018), 1047--1057.
\bibitem{rd} R. Das, \emph{Expansive self–homeomorphisms on $G–$spaces}, Period. Math. Hungar. 31(2) (1995), 123–130.
\bibitem{TKR} T. Das, K. Lee, D. Richeson, J. Wiseman, \emph{Spectral decomposition for topologically Anosov homeomorphisms on noncompact and non-metrizable spaces}, Top. \& its Appl. 160, (2013), 149–-158.
\bibitem{SH} S. Hurder, \emph{Dynamics of expansive group actions on the circle}, preprint.
\bibitem{HK} H. Kato,  \textit{Continuum--wise expansive homeomorphisms}, Canad. J. Math. 45 (1993), 576--598.
\bibitem{mr} J. Meddaugh, B. Raines,  \emph{The structure of limit sets for $\Z^d-$actions}, Dis. Cont. Dyn. Syst. 34 (2014), 4765--4780.
\bibitem{9} Z. Nitecki,  \emph{On semi-stability for diffeomorphisms}, Invent. Math.  14  (1971), 83--122.
\bibitem{os} V. A. Osipov, S. B. Tikhomirov, \emph{Shadowing for actions of some finitely generated groups}, Dyn. Syst.  29(3)  (2014), 337--351.
\bibitem{po} P. Oprocha,  \emph{Chain recurrence in multidimensional time discrete dynamical systems}, Dis. Cont. Dyn, Syst.   20 (2008), 1039--1056.
\bibitem{pv} M. J. Pacificio,  J. L. Vieitez, \emph{On measure expansive diffeomorphisms}, Proc. Amer. Math. Soc. 143(2) (2015),  811--819.
\bibitem{sd} S. Shah, R. Das, \emph{On collective sensitivity for $\Z^d-$actions}, Dynamical Systems, 31(2) (2016), 221--227.
\bibitem{shi} E. Shi, B. Sun, \emph{Fixed Point Properties of Nilpotent Group Actions on 1-–Arcwise Connected Continua}, Proc. Amer. Math. Soc. 137 (2009), 771-–456.
\bibitem{peter} P. Walters, \emph{Anosov diffeomorphisms are topologically stable}, Topology 9 (1970), 71--78.
\end{thebibliography}
\end{document}